\newtheorem{theo}{Theorem}[section]
\newtheorem{remark}[theo]{Remark}
\newtheorem{assumptions}[theo]{Hypotheses}
\newtheorem{proposition}[theo]{Proposition}
 \newtheorem{definition}[theo]{Definition}
 \numberwithin{equation}{section}
\def\beq{\begin{equation}}
\def\eeq{\end{equation}}
\def\beqa{\begin{eqnarray}}
\def\eeqa{\end{eqnarray}}
\def\beqn{\begin{eqnarray*}}
\def\eeqn{\end{eqnarray*}}
\def\cF{{\mathcal F}}
\def\cFv{{\mathcal F}^\ast}
\def\cA{{\mathcal A}}
\def\R{\mathbb{R}}
\def\eps{\varepsilon}
\def\Q{Q}
\def\W{W}
\def\cI{\mathscr{I}}
\def\cH{\mathcal{H}}
\def\cL{\mathcal{L}}
\def\cI{\mathcal{I}}
\def\d{d}
\def\N{d}
\def\n{n}
\def\m{k}
\newcommand{\mres}{\mathbin{\vrule height 1.6ex depth 0pt width
0.13ex\vrule height 0.13ex depth 0pt width 1.3ex}}
\title{}
\title{Domain formation in membranes near the onset of instability}
\author{
	Irene Fonseca \\
	Carnegie Mellon University \\
	Pittsburgh, PA, USA \\
	fonseca@andrew.cmu.edu
	\and
	Gurgen Hayrapetyan\\
	Ohio University \\
	Athens, OH, USA \\
	hayrapet@ohio.edu
	\and
	Giovanni Leoni\\
	Carnegie Mellon University \\
	Pittsburgh, PA, USA \\
	giovanni@andrew.cmu.edu
	\and
	Barbara Zwicknagl\\
	Bonn University \\
	Bonn, Germany \\
	zwicknagl@iam.uni-bonn.de	
	}
\date{\today}
\begin{document}

\maketitle

\begin{abstract}
The formation of microdomains, also called rafts, in biomembranes can be attributed to the surface tension of the membrane. In order to model this phenomenon, a model involving a  coupling between the local composition and the local curvature was proposed by Seul and Andelman in 1995.
In addition to the familiar Cahn-Hilliard/Modica-Mortola energy, there are additional `forces' that prevent large domains of homogeneous concentration. This is taken into account by the bending energy of the membrane, which is coupled to the value of the order parameter, and reflects the notion that surface tension associated with a slightly curved membrane influences the localization of phases as the geometry of the lipids has an effect on the preferred placement on the membrane.

The main result of the paper is the study of the $\Gamma$-convergence of this family of energy  functionals, involving nonlocal as well as negative terms.
Since the minimizers of the limiting energy have minimal interfaces, the physical interpretation is that, within a sufficiently strong interspecies surface tension and a large enough sample size, raft microdomains are not formed.  
\end{abstract}

{\bf{Keywords:}} $\Gamma$-convergence, nonlocal energies, interpolation.

{\bf{AMS Mathematics Subject Classification:}} 49J45, 74K15.

\section{Introduction}

The continuum theory of  membranes has been an active area of research in material and biological sciences since the pioneering works of Canham and Helfrich, \cite{Canham70, Helfrich73}. Biological cell membranes or biomembranes are complex structures commonly made up of lipids, proteins, and cholesterol.   Of recent very widespread interest is the phase separation and domain formation of these compounds forming the cell membrane.  The resulting nanoscale microdomains, referred to as `lipid rafts', are believed to be responsible for membrane trafficking, intracellular signaling, and assembly
of specialized structures, \cite{simons1997functional}. Many important biological processes, such as virus budding,
endocytosis, and immune responses, are believed to be linked to membrane rafts, \cite{rajendran2005lipid}.  Ever since the first experimental evidence of raft formation in late 1980's, there has been a growing body of literature on both theoretical and experimental aspects of this phenomenon, \cite{elson2010phase}.
However, due to very small scales associated with raft domains (they are too small to be optically resolved) \cite{rajendran2005lipid, brown1998functions, meinhardt2013monolayer},  there are different viewpoints on the precise structure and stability of lipid rafts,  \cite{leslie2011lipid}.   As a result, understanding the conditions for the formation, as well as  mechanisms driving stability (and instability), of these microdomains is of great importance.

It has been proposed in \cite{Komura:Langmuir:2006} that raft formation can be attributed to the surface tension of the membrane.  The experimental basis for the theory comes from the work of Rozovsky et al in \cite{rozovsky2005formation}, in which domain formation in a ternary mixture of sphingomyelin, DOPC, and cholesterol is observed for a vesicle adhered to a substrate structure.
To study the relation between an increase in surface tension and the morphological transitions on the membrane plane,  
a  coupling between the local composition and the local curvature was proposed in \cite{Komura:Langmuir:2006}. 
The authors consider a free energy framework and use an energy functional first introduced in \cite{Seul:Science:1995} to model phase separation of a di-block copolymer in a membrane allowing out of plane (bending) distortions  (see also \cite{kawakatsu1993phase2,kawakatsu1993phase,leibler87}).
\begin{figure}
\begin{center}
\includegraphics[height=2in]{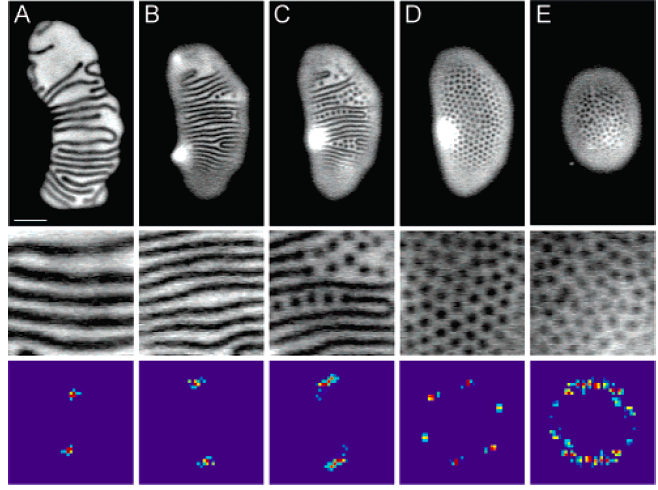}
\includegraphics[height=2in]{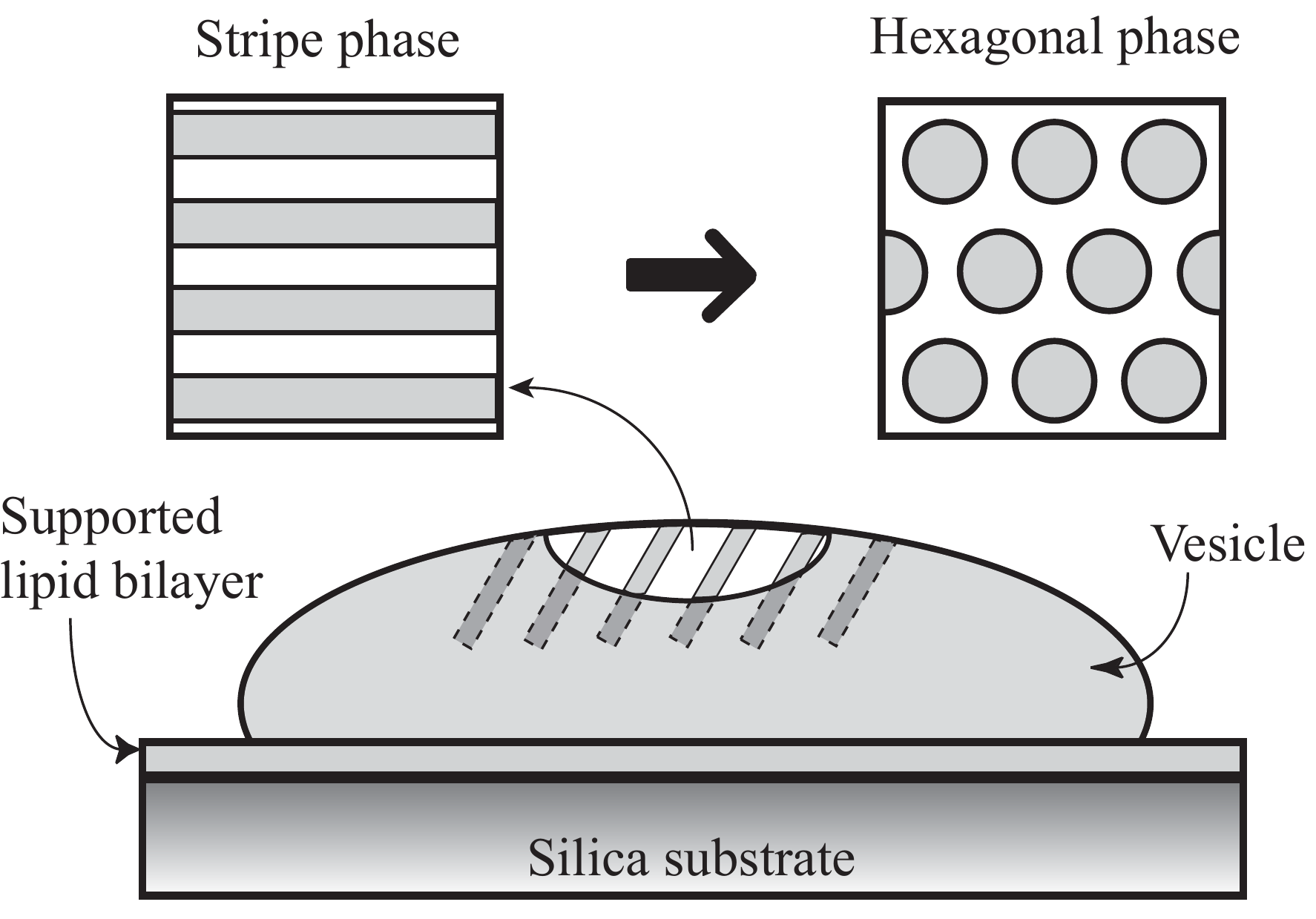}
\vskip 0.1in
\caption{Experimental and schematic representation of rafts. The first picture is reprinted with permission from S. Rozovsky, Y. Kaizuka, and J. T. Groves. Formation and spatio-temporal evolution of periodic structures
in lipid bilayers. J. Am. Chem. Soc., 127(1):36--37, 1 2005. Copyright 2005 American Chemical Society. It depicts epifluorescence microscopy images of phase separation in a vesicle composed of a mixture of sphingomyelin, DOPC, and cholesterol adhering to a supported lipid bilayer. Rafts, initially forming a stripe pattern evolve into a hexagonal array of circular domains as the vesicle changes shape. The last row depicts Fourier spectra for the ordered regions depicted in the middle row.
 The second picture, showing a schematic representation of the same process, is reprinted with permission from S. Komura, N. Shimokawa, and D. Andelman. Tension-induced morphological transition in mixed lipid
bilayers. Langmuir, 22:6771--6774, 2006. Copyright 2006 American Chemical Society.
}
\label{experiment1}
\end{center}
\end{figure}

 Similar to the classic Ginzburg-Landau models, the system is described in terms of an order parameter $u$ that may, for instance, model the relative composition of the lipids and cholesterol on the membrane plane.  However, in addition to the familiar Cahn-Hilliard/Modica-Mortola energy (see \cite{modica1987gradient}),
\beq
\label{ACFunc}
\mathcal{A}_\eps[u] := \int_\Omega \left( \frac{1}{\eps} W(u) + \eps |\nabla u|^2 \right) dx,
\eeq
that models line tension between domains and represents `short-range' interactions and whose minimization drives the system to evolve into $A$ rich and $B$ rich phases (corresponding to $u = \alpha$ or $u = \beta$, minima of a double-well potential $W$),  there are additional `forces' that prevent large domains of homogeneous concentration.  
In \cite{Seul:Science:1995} Seul and Andelman proposed a nonlocal contribution to the energy by considering an energy functional that  takes into account the bending energy of the membrane, and couples it to the value of the order parameter. The idea is that surface tension associated with a slightly curved membrane influences the localization of phases as the geometry of the lipids has an effect on the preferred placement on the membrane.  Similarly, the geometry of the membrane may adapt to that of the molecules.  The resulting energy has the form
\beq
\label{mainEnergy}
\mathcal{E}[\phi, h] = \int_D \left(f(\phi) + \frac{1}{2} b |\nabla \phi|^2 + \frac{1}{2} \sigma |\nabla h|^2 + \frac{1}{2} \kappa (\Delta h)^2 + \Lambda \phi \Delta h \right) d\bar{x}.
\eeq
Here $D := \{ Lx: x \in \Omega\}$ is the domain with the characteristic size $L$, $\phi$ is the order parameter, $h$ represents the height profile of the membrane, $f(\phi) := \frac{a_2}{2} \phi^2 + \frac{a_4}{4} \phi^4$, where $a_2, a_4$ are constants, $b > 0$ is related to the line tension between different domains, $\sigma > 0$ and $\kappa >0$ are the surface tension and bending rigidity of the membrane, respectively, and $\Lambda$ is the composition-curvature coupling constant.

\begin{table}
\label{parameterValuesTable}
\begin{center}
\begin{tabular}{|c|c|c|c|}
\hline
Symbol  & Description &  Value  \\
\hline
$a_4$ &  &  $10^{-5 }J/m^2$ \\
\hline
$b$ &  line tension &  $  5 \times 10^{-19} J$ \\
\hline
$\sigma$ & surface tension &  $5\times 10^{-6}$ to $10^{-4} J/m^2$ \\
\hline
$\kappa$ & bending rigidity of the membrane  &  $10^{-19} J$ \\
\hline $\Lambda$ & composition-curvature coupling constant &  $4.9 \times 10^{-12} J/m$ \\
\hline
\end{tabular}
\caption{Parameter descriptions and characteristic values, \cite{Komura:Langmuir:2006}.}
\end{center}
\end{table}

We note that several simplifying assumptions have been made in relation to the classical membrane energies (e.g. \cite{Canham70, Helfrich73}) or more recent multi-component biological membrane energies (e.g.  \cite{givli:2012}).  Rather than considering a closed hypersurface to represent the vesicle. We assume that the vesicle is almost flat and that its shape is described in terms of the distance, $h$, to the reference plane, $D \subset \mathbb{R}^2$. In addition, for simplicity, higher-order coupling terms between the composition and the curvature of the membrane are omitted.  There is no direct measure of the resulting single coupling parameter, $\Lambda$, but it can be fitted based on experimental data (see \cite{Komura:Langmuir:2006} for details).

Since minimizers of $\mathcal{E}$ satisfy the
Euler-Lagrange equations, we may consider the minimization problem for $\mathcal{E}[\phi, h]$ under the constraint, $\frac{\delta \mathcal{E}}{\delta h} = 0$.  
 Using the last equation to eliminate $h$ (see the Appendix) and rescaling
\begin{align}
u(x) = \phi(Lx), \quad \eps :=  \sqrt{\frac{\kappa}{L^2 \sigma}}, 
\quad q := 1 - \frac{b \sigma}{\Lambda^2},
\quad
W(u) := \frac{2\kappa}{\Lambda^2} f(u),
\mbox{\quad and\ }
\cFv_\eps := \frac{1}{\eps}\frac{2\kappa}{\Lambda^2 L^d} \mathcal{E},
\nonumber
\end{align}
one can reduce \eqref{mainEnergy} to
\beq
\label{mainFunct}
\cFv_\eps[u]  :=
\frac{1}{\eps} \int_\Omega \left(W(u) - u^2 + (1-q) \eps^2 |\nabla u|^2 + u \left( {\bf 1} - \eps^2 \Delta \right)^{-1} u \right)\, dx.
\eeq
Here $q$ is a constant parameter and the second order differential operator  ${\bf 1} - \eps^2 \Delta: H^2(\Omega) \rightarrow L^2(\Omega)$ is subject to Neumann boundary conditions. 
   A detailed derivation is given in the Appendix.  In addition, Table 1 lists typical values for the parameters.  Note that $\sqrt{\frac{\kappa}{\sigma}} \sim 10^{-7}m$, so the domain size of $10$ microns corresponds to $\eps \sim 10^{-2}$.  One may also easily check from the table that the relevant values of the parameter $q$ fall in the interval $(-1.1,1)$, and for fixed $b$ and $\Lambda$ correspond to varying the surface tension.    

Moreover, the line tension, surface tension, and the composition-curvature coupling constant are embedded in the effective parameter $q$.  To develop some intuition about the effect of varying $q$ we momentarily assume dependence only on a single direction and consider the energy of a single term in the Fourier series expansion of $u$ (see the Appendix),
$$
u(x) = \psi_n(x),\qquad x\in \Omega=(-1,1),
$$
where 
$$
\psi_n(x) := \cos(\lambda_n x)
\quad
\text{and}
\quad
\lambda_n =  2\pi n.
$$
Then, 
$$
({\bf 1} - \eps^2 \Delta)^{-1}  \psi_n = \frac{1}{1 + \eps^2 \lambda_n^2} \psi_n,
$$
and separating the potential term in the energy we have,
\begin{align}
\label{FExp}
\cF_\eps^*[\psi_n] & = \frac{1}{\eps} \int_{-1}^1 W(\psi_n) dx + \frac{1}{\eps} \int_{-1}^1 \left(- 1 + (1-q) \eps^2 \lambda_n^2 + \frac{1}{1+\eps^2 \lambda_n^2} \right) \cos^2(\lambda_n x) dx \nonumber \\
& =  \frac{1}{\eps} \int_{-1}^1 W(\psi_n) dx  + \frac{1}{\eps} \cF_{q,n},
\end{align}
where
\beq
\cF_{q,n} := - 1 + (1-q) \eps^2 \lambda_n^2 + \frac{1}{1+\eps^2 \lambda_n^2}.
\eeq
For fixed $q>0$, the minimum of $\cF_{q,n}$ is achieved by
$$
\eps^2 \lambda_{n_*}^2 = \frac{1}{\sqrt{1-q}} - 1,
$$
with the corresponding energy
\beq
\cF_{q,n_*} =  -2 +2 \sqrt{1-q}  +q<0.
\eeq
As evident from the calculations above, the contribution to the full energy from $\cF_{q,n_*}$ becomes negative as $q$ increases from $0$ (corresponding to the weakening tension).  Hence, depending on the properties of the potential $W$ the functional may be unbounded from below.  A natural question is to understand this bifurcation as $q$ increases. This paper represents a step towards that goal. In particular, we show that for a standard family of double-well potentials (see Hypotheses \ref{assume1}), even if $q$ is positive, the energy is bounded from below and $\Gamma$-converges to the perimeter functional for $q$ sufficiently small. 
Since the minimizers of the limiting energy have minimal interfaces, the physical interpretation is that for $L^2 \gg \kappa / \sigma$, $(\eps \ll 1)$ raft microdomains are not formed in this regime.  If the surface tension is too small and the functional is unbounded from below as $\eps \rightarrow 0$, different mathematical methods will have to be used to study the formation of raft-like microdomains (e.g. \cite{mizel:1998,peletier:1997}).

We remark that when $q \le 0$ the $\Gamma$-convergence to the perimeter functional can be proved under weaker conditions on the potential.  In that case the functional is nonnegative (this can be seen from the reformulation of the problem presented in \eqref{mainFunctNew}). The $\Gamma$-convergence of similar energies has been considered before (e.g. \cite{hilhorst:2002,baia2013coupled}), however there are some differences with the functional \eqref{mainFunctNew}   (for example when $q=0$) and will be addressed in a separate paper. 

Finally, we observe that in our context the relevant physical dimension is $d=2$, although the analysis presented here is carried out in arbitrary dimension $d\ge 2$.

\section{Preliminaries, Notation, and Statement of Results}
\label{prelimSection}

A natural mathematical framework for studying the asymptotic behavior of the family of functionals \eqref{mainFunct} is the notion of $\Gamma$-convergence introduced by De Giorgi in \cite{giorgi1975sulla} (see also \cite{braides2002gamma,dal1993introduction}).  In a general metric space setting the definition is given below.
\begin{definition}
Let $(Y, d)$ be a metric space and consider a sequence $\{\cF_n\}$ of functionals $\cF_n$: $Y \rightarrow [-\infty, \infty]$.  We say that $\{\cF_n\}$ $\Gamma$-converges to a functional $\cF: Y \rightarrow [-\infty, \infty]$ if the following properties hold:
\begin{enumerate}
\item (Liminf Inequality) For every $y \in Y$ and every sequence $\{y_n\} \subset Y$ such that $y_n \rightarrow y$, 
\beq
\cF[y] \le \liminf_{n\rightarrow \infty} \cF_n[y_n]. \nonumber
\eeq
\item (Limsup Inequality) For every $y \in Y$ there exists $\{y_n\} \subset Y$ such that $y_n \rightarrow y$ and
\beq
\limsup_{n\rightarrow \infty} \cF_n[y_n] \le \cF[y]. \nonumber
\eeq
\end{enumerate}
The functional $\cF$ is called the $\Gamma$-limit of the sequence $\{\cF_n\}$.
\end{definition}
A key property of $\Gamma$-convergence is the fact that, under appropriate compactness conditions, the sequence of minimizers of the functionals $\cF_n$ converge to a minimizer of the limiting functional $\cF$.
Moreover,  one can show that the isolated local minima of the $\Gamma$-limit $\cF$ persist under small perturbations (see \cite{kohn1989local,dal1993introduction}).

The problem of finding a characterization of the $\Gamma$-limit of \eqref{mainFunct} has been considered in the one-dimensional setting by Ren and Wei in \cite{ren2004soliton}, but in a different parameter regime.  Due to the different scaling of the terms, the technique used in that paper is not applicable to our case.
 Recall that the last term in \eqref{mainFunct} renders the problem nonlocal. 
A local approximation of \eqref{mainFunct} was studied in \cite{chermisi2010singular} and \cite{zeppieri41asymptotic}.
 We refer to the derivation of \eqref{AppEn} in the Appendix for the precise connection between the models. Qualitative properties of local minimizers of the local approximation model have been studied extensively to explain the formation of periodic layered structures (see \cite{bonheure:2003,coleman:1992,mizel:1998,peletier:1997}).

We now give the precise formulation of our results.  Let $\Omega \subset \R^d$, $d\ge 2$, be an open, bounded set of class $C^2$, and let $W$ be a twice continuously differentiable double-well potential defined on the real line. 
We make the following hypotheses on $W$.
\begin{assumptions}
\label{assume1}
\mbox{ }
\begin{enumerate}
\item $W(s) > 0$ if $s \ne \pm 1$.
\item $W(\pm 1) = 0$.
\item There exists $c_w > 0$ such that
$
W(s) \ge c_w (s \mp 1)^2 \mbox{ for } \pm s \ge 0.
$
\item There exist constants $K_w$, $C_w > 0$ such that
$
|W'(s)| \le C_w  \sqrt{W(s)}
$
and $|W''(s)| \le K_w$ for all $s \in \R$.
\end{enumerate}
\end{assumptions}
\begin{remark}
\label{remark1}
Note that conditions 3 and 4 imply that $W$ has quadratic growth at infinity.
\end{remark}
For the purposes of our analysis it will be convenient to rewrite the functional $\cFv_\eps$ as follows. Given $u\in W^{1,2}(\Omega)$, we define $v\in W^{3,2}(\Omega)$ via
\begin{align}
- \eps^2 \Delta v + v = u \text{ in } \Omega  \text{ \qquad and\qquad  }  \frac{\partial v}{\partial \n} = 0 \text{ on } \partial \Omega, \nonumber
\end{align} 
where $\n$ denotes the outward unit normal to $\partial\Omega$, and use the abbreviatory notation $v := ({\bf 1} - \eps^2 \Delta)^{-1} u$. Integrating by parts we obtain 
\begin{align}
\label{uv}
\cFv_\eps[u]
& =  \int_\Omega \left(\frac{1}{\eps} W(u) - \eps q |\nabla u|^2 + \eps^3 (\Delta v)^2 + \eps^5 |\nabla \Delta v|^2 \right) dx \nonumber \\
& =   \int_\Omega \left(\frac{1}{\eps} W(u) - \eps q |\nabla v|^2 + (1-2q) \eps^3 (\Delta v)^2 + (1-q) \eps^5 |\nabla \Delta v|^2 \right) dx  . \nonumber
\end{align} 
Hence, we may also view $\cFv_\eps$ as $\cF_\eps[v]$ with $\cF_\eps: L^{2}(\Omega) \rightarrow (-\infty,\infty]$ given by


\beq
\label{mainFunctNew}
\cF_\eps[v]  :=
\left\{\begin{array}{ll}
\cF_\eps[v; \Omega]   &\mbox{if  }  v \in W^{3,2}(\Omega),  \frac{\partial v}{\partial \n}=0\text{\ on\ }\partial\Omega,\\
+\infty & \text{otherwise},
\end{array}\right.
\eeq
where
\beq
\label{mainFuncFull}
\cF_\eps[v; A] = \int_A \left(\frac{1}{\eps} W(-\eps^2 \Delta v + v) - \eps q |\nabla v|^2 + (1-2q) \eps^3 (\Delta v)^2 + (1-q) \eps^5 |\nabla \Delta v|^2 \right) dx \nonumber
\eeq
for every open set $A \subset \Omega$.
\begin{remark}
Observe that if $v \in W^{3,2}(\Omega)$ does not satisfy Neumann boundary conditions on $\partial \Omega$, then $\cF_\eps[v; \Omega] < \cF_\eps[v] = \infty$.
\end{remark}

\begin{definition}
\label{mainDefs}
Given a vector $\nu \in \mathbb{S}^{d-1}$ ($d-1$ dimensional unit sphere), let  $\{\nu_1, \cdots, \nu_{d-1}, \nu\}$  be an orthonormal basis of $\mathbb{R}^d$.  We will denote by $Q_\nu$ an open unit cube centered at the origin with two of its faces normal to $\nu$, i.e.,
\beq
Q_\nu := \left\{ x \in \R^d: |x \cdot \nu| < \frac{1}{2}, \ |x \cdot \nu_i| < \frac{1}{2}, \ i = 1, \dots, d-1\right\}. \nonumber
\eeq
 If $x_0 \in \R^d$ and $r>0$, then $Q_\nu(x_0, r) := x_0 + rQ_\nu$.  If $\{\nu_1, \cdots, \nu_{d-1}, \nu\}$ is the canonical basis, we drop the dependence on $\nu$, i.e., $Q(x_0, r) := x_0 + r (-1/2, 1/2)^d = x_0 + rQ$, where $Q$ is the open unit cube centered at the origin with faces normal to the coordinate axes.

Define the admissible set to be
\begin{align}
\cA_\nu  :=  \{ v \in W^{3,2}_{loc}(\R^d): \, & v = 1 \mbox{ in a neighborhood of } x \cdot \nu = -1/2, \nonumber \\
&  v = -1 \mbox{ in a neighborhood of } x \cdot \nu = 1/2, \nonumber \\
& v(x) = v(x+ \nu_i) \mbox{ for all } x \in \R^d, \ i = 1, \dots, d-1 \}, \nonumber
\end{align}
and set
\beq
\label{eq:md}
m_{d}  := \inf  \{ \cF_{\eps}[v; Q_\nu]: 0< \eps \le 1, v \in \cA_\nu \} .
\eeq
\end{definition}
As we will see in the sequel (see \eqref{LimitFunctional}) the constant $m_d$ represents the surface energy density per unit area of the limit energy. The fact that $m_d$ is characterized by the cell problem \eqref{eq:md} is to be expected in this type of singular perturbations problems (see, e.g., \cite{baia2013coupled}, \cite{chermisi2010singular}, \cite{zeppieri41asymptotic}). As it turns out, in the case in which only first order derivatives are considered in the energy functionals, $m_d$ reduces to a one-dimensional geodesic distance between the wells for an appropriate metric involving the double-well potential $W$ (see \cite{fonseca1989gradient}). 

\begin{remark}
Since the gradient and Laplacian are invariant with respect to rotations, we can choose the coordinate system in such a way that the standard vector $e_d$ is parallel to $\nu$.  It follows that $m_d$ does not depend on $\nu$, and we abbreviate $\cA := \cA_{e_d}$.
\end{remark}
\begin{remark}
We will show in Proposition \ref{mdPosProp} that $m_d > 0$ if $q$ is sufficiently small.
\end{remark}
We introduce the  functional $\cF: L^2(\Omega) \rightarrow [0, +\infty]$,
\beq
\label{LimitFunctional}
\cF[v] := \left\{\begin{array}{ll}
m_d \mbox{Per}_\Omega(\{v=1\})  &\mbox{ if  }  v \in BV(\Omega; \{-1,1\}), \\
+\infty & \mbox{ if } v \in L^2(\Omega) \backslash BV(\Omega; \{-1,1\}).
\end{array}\right.
\eeq
Here $BV(\Omega; \{-1,1\})$ denotes the space of functions of bounded variation taking values in the set $\{-1,1\}$, (see the discussion at the end of the section).
The following theorems establish the $\Gamma$-convergence of $\cF_\eps$ to $\cF$, and ensures convergence of almost minimizers of $\cF_{\eps}$ to minimizers of $\cF$.
\begin{theo}
\label{CompactnessRn}
(Compactness)  Assume that $W \in C^2(\R)$ satisfies Hypotheses \ref{assume1}.  There exists $q_0 > 0$, depending only on the potential $W$, such that if $q < q_0$,
$\eps_n \rightarrow 0^+$ and  $\{v_n\} \subset W^{3,2}(\Omega)$ satisfies
\beq
\label{compUB}
\sup_n \cF_{\eps_n} [v_n] < \infty,
\eeq
then there exist a subsequence $\{v_{n_k}\}$ of $\{v_n\}$ and $v \in BV(\Omega;\{-1,1\})$ such that
\beq
\label{compConv}
v_{n_k} \rightarrow v \quad \mbox{ and } \quad \eps_{n_k}^2 \Delta v_{n_k} \rightarrow 0 \quad  \mbox{ in } L^2(\Omega).
\eeq
\end{theo}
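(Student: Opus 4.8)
The plan is to run a Modica--Mortola type compactness argument, but -- since the only first--order contribution to $\cF_\eps$, namely $-\eps q|\nabla v|^2$, has the wrong sign -- to recover the gradient control from the \emph{higher}-order positive terms $\eps^3(\Delta v)^2$ and $\eps^5|\nabla\Delta v|^2$ through an interpolation estimate. We may assume $0<q<q_0$, the case $q\le0$ being easier since then $\cF_\eps\ge0$ (all four integrands in \eqref{mainFunctNew} are nonnegative). Throughout put $u_n:=v_n-\eps_n^2\Delta v_n$, so that $v_n=(\mathbf 1-\eps_n^2\Delta)^{-1}u_n$; testing the resolvent equation $-\eps_n^2\Delta v_n+v_n=u_n$ with $v_n$ gives $\|v_n\|_{L^2}\le\|u_n\|_{L^2}$ and $\eps_n^2\|\nabla v_n\|_{L^2}^2\le\tfrac14\|u_n\|_{L^2}^2$.

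\emph{A priori bound.} Discarding the nonnegative terms $(1-2q)\eps_n^3(\Delta v_n)^2$ and $(1-q)\eps_n^5|\nabla\Delta v_n|^2$ and using $\eps_n q\|\nabla v_n\|_{L^2}^2\le \tfrac{q}{4\eps_n}\|u_n\|_{L^2}^2$,
\[
\cF_{\eps_n}[v_n]\ \ge\ \frac1{\eps_n}\int_\Omega\Big(W(u_n)-\tfrac q4\,u_n^2\Big)\,dx .
\]
By condition~3 of Hypotheses~\ref{assume1}, $W(s)-\tfrac q4 s^2\ge(c_w-\tfrac q4)(|s|-\alpha)^2-C(q)$ for a suitable $\alpha=\alpha(q)$ and $C(q)>0$, so \eqref{compUB} together with $\eps_n\le1$ yields $\sup_n\big(\|u_n\|_{L^2}+\|v_n\|_{L^2}\big)<\infty$. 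This $L^2$--bound is what is needed to apply the interpolation inequality below, and it re-enters at the very end.

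\emph{Coercivity (the crux).} The delicate point is to absorb $\eps_n q\|\nabla v_n\|_{L^2}^2$ into the bending terms. The naive bound $\eps\|\nabla v\|_{L^2}^2\le\eta\,\eps^3\|\Delta v\|_{L^2}^2+\tfrac1{4\eta}\,\tfrac1\eps\|v\|_{L^2}^2$ is useless, because the last term carries a factor $\tfrac1\eps$ that cannot be compensated (indeed $\tfrac1\eps\|v\|_{L^2}^2$ blows up along any sequence approaching a nontrivial sharp interface). What one must use instead is an interpolation inequality whose lowest--order quantity is $\int_\Omega W(u)\,dx$ rather than $\|v\|_{L^2}^2$ -- legitimate because $\nabla v$ is essentially supported where $W(u)$ is nondegenerate -- schematically
\[
\eps\int_\Omega|\nabla v|^2\,dx\ \le\ A\,\Big(\tfrac1\eps\!\int_\Omega W(u)\,dx\Big)^{1/2}\Big(\eps^3\!\int_\Omega(\Delta v)^2\,dx\Big)^{1/2}+B\,\tfrac1\eps\!\int_\Omega W(u)\,dx+(\text{lower order}),
\]
with $A,B$ depending only on $W$ and $\Omega$. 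Granting such an estimate, Young's inequality and the smallness $q<q_0$ produce constants $c_0>0$, $C_0\ge0$ (depending only on $W,\Omega$) with
\[
\cF_{\eps_n}[v_n]\ \ge\ c_0\Big(\tfrac1{\eps_n}\!\int_\Omega W(u_n)\,dx+\eps_n^3\!\int_\Omega(\Delta v_n)^2\,dx+\eps_n^5\!\int_\Omega|\nabla\Delta v_n|^2\,dx\Big)-C_0 .
\]
Establishing such an interpolation inequality -- in arbitrary dimension $d\ge2$, for the composite argument $u=v-\eps^2\Delta v$, and with constants independent of $\eps$ and $q$ -- is the principal obstacle, and the threshold $q_0$ is precisely the one that makes the bending terms dominate $q$ times the bound on $\eps\|\nabla v\|_{L^2}^2$.

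\emph{Passage to the limit.} Combining the last display with \eqref{compUB}, the quantities $\tfrac1{\eps_n}\int_\Omega W(u_n)\,dx$, $\eps_n^3\|\Delta v_n\|_{L^2}^2$ and $\eps_n^5\|\nabla\Delta v_n\|_{L^2}^2$ are uniformly bounded; since $\|\eps_n^2\Delta v_n\|_{L^2}^2=\eps_n\cdot\eps_n^3\|\Delta v_n\|_{L^2}^2\to0$, we obtain $\eps_n^2\Delta v_n\to0$ and $u_n-v_n\to0$ in $L^2(\Omega)$. Feeding the bounds on $\tfrac1{\eps_n}\int_\Omega W(u_n)\,dx$ and $\eps_n^3\|\Delta v_n\|_{L^2}^2$ back into the interpolation inequality gives $\eps_n\|\nabla v_n\|_{L^2}^2\le C$, hence $\eps_n\|\nabla u_n\|_{L^2}^2\le C$. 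The classical Modica--Mortola bound then applies to $\{u_n\}$: with $\phi(t):=\int_0^t\sqrt{W(s)}\,ds$ one has $\int_\Omega|\nabla(\phi\circ u_n)|\,dx\le\tfrac12\int_\Omega\big(\tfrac1{\eps_n}W(u_n)+\eps_n|\nabla u_n|^2\big)\,dx\le C$, while the quadratic growth of $W$ (Remark~\ref{remark1}) and the $L^2$--bound of the first step give $\|\phi\circ u_n\|_{L^1}\le C(1+\|u_n\|_{L^2}^2)\le C$. Thus $\{\phi\circ u_n\}$ is bounded in $W^{1,1}(\Omega)$, so along a subsequence $\phi\circ u_{n_k}\to g$ in $L^1(\Omega)$ and a.e.\ with $g\in BV(\Omega)$. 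Since $\phi$ is a continuous strictly increasing bijection of $\R$, $u_{n_k}\to\phi^{-1}(g)=:v$ a.e.; as $\int_\Omega W(u_{n_k})\,dx=\eps_{n_k}\cdot\tfrac1{\eps_{n_k}}\int_\Omega W(u_{n_k})\,dx\to0$, Fatou gives $W(v)=0$ a.e., so $v\in\{-1,1\}$ a.e., and $\phi\circ v=\phi(1)\chi_{\{v=1\}}+\phi(-1)\chi_{\{v=-1\}}=g\in BV(\Omega)$ forces $v\in BV(\Omega;\{-1,1\})$. Finally, $|u_{n_k}|^2\le C(1+|\phi\circ u_{n_k}|)$ because $\phi^{-1}$ grows sublinearly, so the equi-integrability of $\{\phi\circ u_{n_k}\}$ makes $\{|u_{n_k}|^2\}$ equi-integrable; by Vitali's theorem $u_{n_k}\to v$ in $L^2(\Omega)$, whence $v_{n_k}=u_{n_k}+\eps_{n_k}^2\Delta v_{n_k}\to v$ in $L^2(\Omega)$, which together with $\eps_{n_k}^2\Delta v_{n_k}\to0$ is exactly \eqref{compConv}.
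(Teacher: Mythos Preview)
Your overall architecture---recover gradient control from the higher--order terms via an interpolation inequality, then run Modica--Mortola compactness---is exactly right, and your ``passage to the limit'' step is sound. The gap is precisely where you locate it: the coercivity step. You write the key interpolation only ``schematically'', with $W(u)$ on the right, and justify it by the heuristic that ``$\nabla v$ is essentially supported where $W(u)$ is nondegenerate''. That heuristic is not an argument, and an inequality of the form you display (with the composite argument $u=v-\eps^2\Delta v$ inside $W$) does not come for free; as stated, it is an assumption, not a proof.

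The paper closes this gap in two moves you do not identify. First, it uses an interpolation inequality with $W(v)$, not $W(u)$: there is $q_*>0$ (independent of $\eps$) such that
\[
q_*\,\eps\!\int_\Omega|\nabla v|^2\,dx\ \le\ \frac1\eps\!\int_\Omega W(v)\,dx+\eps^3\!\int_\Omega|\nabla^2 v|^2\,dx
\]
(this is Proposition~\ref{interpolProp}, taken from \cite{chermisi2010singular}). Second, it passes from $W(u)$ to $W(v)$ by Taylor's formula together with Hypothesis~\ref{assume1}(4): since $|W''|\le K_w$ and $|W'|\le C_w\sqrt{W}$,
\[
\frac1\eps W(-\eps^2\Delta v+v)\ \ge\ \frac\theta\eps W(v)-\theta W'(v)\,\eps\Delta v-\tfrac{\theta K_w}{2}\,\eps^3(\Delta v)^2,
\qquad
|W'(v)\,\eps\Delta v|\ \le\ \tfrac1{2\eps}W(v)+\tfrac{C_w^2}{2}\,\eps^3(\Delta v)^2,
\]
so the cross term is absorbed by $\tfrac1\eps W(v)$ and $\eps^3(\Delta v)^2$. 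This is where the structural hypotheses on $W$ (item~4) actually enter---your sketch never uses them. One then needs Proposition~\ref{EllipticEstProp} (elliptic regularity under the Neumann condition) to replace $\|\nabla^2 v\|_{L^2}^2$ by $\|\Delta v\|_{L^2}^2+C(\Omega)\|v\|_{L^2}^2$, and Hypothesis~\ref{assume1}(3) to absorb the resulting $\eps^3\|v\|_{L^2}^2$ into $\tfrac1\eps\int W(v)$ up to a harmless $O(\eps^3|\Omega|)$ remainder. Choosing $\theta\sim q$ small yields the clean lower bound $\cF_\eps[v]\ge q\,\cI_\eps[v;\Omega]-O(\eps^3)$ (Proposition~\ref{LBThm}), from which compactness follows as you outline---indeed more directly, since the paper runs Modica--Mortola on $v_n$ itself rather than on $u_n$.
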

\begin{theo}
\label{GamamConvThm}
Assume that $W \in C^2(\R)$ satisfies Hypotheses \ref{assume1}.  There exists $\bar{q} > 0$, depending only on the potential $W$ and $\Omega$, such that for all $0 < q < \bar{q}$ the following inequalities hold:
\begin{enumerate}
\item Liminf Inequality: For every sequence of positive real numbers $\eps_n \rightarrow 0$, for every $v \in L^2(\Omega)$, and for every $\{v_n\} \subset W^{3,2}(\Omega)$ such that  $v_n \rightarrow v$ in $L^2(\Omega)$,
\beq
\label{LimInf}
\liminf_{n \rightarrow \infty} \cF_{\eps_n}[v_n] \ge \cF[v].
\eeq
\item Limsup Inequality: For every $v \in L^2(\Omega)$ and for every sequence of positive real numbers $\eps_n \rightarrow 0$, there exists a sequence $\{v_n\} \subset W^{3,2}(\Omega)$  such that $v_n \rightarrow v$ in $L^2(\Omega)$ and
\beq
\label{Limsup}
\limsup_{n \rightarrow \infty} \cF_{\eps_n}[v_n] \le \cF[v].
\eeq
\end{enumerate}
\end{theo}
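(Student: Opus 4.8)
The plan is to use the blow-up method of De Giorgi, Fonseca and Müller for the $\liminf$ inequality and an explicit recovery-sequence construction built from near-optimal profiles of the cell problem \eqref{eq:md} for the $\limsup$ inequality. A computation useful in both directions is the scaling identity
\beq
\cF_\eps[v((\cdot-x_0)/\mu);\, Q_\nu(x_0,\mu)] \;=\; \mu^{d-1}\,\cF_{\eps/\mu}[v;\, Q_\nu], \nonumber
\eeq
valid for $x_0\in\R^d$, $\mu>0$ and $\eps>0$, which one checks by changing variables (each derivative of $v$ contributes a factor $\mu^{-1}$, and the powers of $\eps$ and $\mu$ attached to the four terms of $\cF_\eps$ balance precisely). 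In particular, infimizing over $\eps\le1$ in \eqref{eq:md} is exactly what makes $m_d$ behave well under this rescaling. Throughout we take $q$ small enough that $q<\min\{1/2,q_0\}$ (so Theorem \ref{CompactnessRn} applies and only the term $q\eps|\nabla v|^2$ in the integrand of $\cF_\eps$ can be negative) and, by Proposition \ref{mdPosProp}, $m_d>0$.

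\emph{Liminf inequality.} Let $v\in L^2(\Omega)$ and $v_n\to v$ in $L^2(\Omega)$; we may assume $\liminf_n\cF_{\eps_n}[v_n]=\lim_n\cF_{\eps_n}[v_n]<\infty$, so by Theorem \ref{CompactnessRn} (up to a subsequence) $v\in BV(\Omega;\{-1,1\})$, $v_n\to v$ and $\eps_n^2\Delta v_n\to 0$ in $L^2(\Omega)$, hence also $u_n:=-\eps_n^2\Delta v_n+v_n\to v$ in $L^2(\Omega)$. I would first prove the localized bound $\liminf_n\cF_{\eps_n}[v_n;A]\ge m_d\,\mbox{Per}_A(\{v=1\})$ for every open $A\subset\subset\Omega$, and then pass to the supremum over such $A$ using inner regularity of $\mbox{Per}_\Omega(\{v=1\})$; restricting to $A\subset\subset\Omega$ sidesteps tracking the negative term near $\partial\Omega$. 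For the localized bound, introduce the signed measures $\sigma_n$, $\sigma_n(A):=\cF_{\eps_n}[v_n;A]$; the a priori estimates behind Theorem \ref{CompactnessRn} bound $\eps_n^{-1}\int_\Omega W(u_n)$, $\eps_n^3\int_\Omega(\Delta v_n)^2$ and $\eps_n^5\int_\Omega|\nabla\Delta v_n|^2$ in terms of $\cF_{\eps_n}[v_n]$, hence also $\eps_n\int_\Omega|\nabla v_n|^2$, so the $\sigma_n$ have uniformly bounded total variation and admit (for a subsequence) a weak-$*$ limit $\sigma$. It then suffices to show $\sigma\ge m_d\,\mathcal{H}^{d-1}\mres\partial^*\{v=1\}$ and that the absolutely continuous part of $\sigma$ is $\ge0$ off $\partial^*\{v=1\}$. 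The second point follows by rescaling at a Lebesgue point $x_0$ of $v$: the rescaled functions converge to the constant $v(x_0)\in\{-1,1\}$, which has zero energy, and a stability argument for the negative term (again for $q$ small) forces the blow-up density to be nonnegative. For the first point, fix $\mathcal{H}^{d-1}$-a.e. $x_0\in\partial^*\{v=1\}$, let $\nu$ be the measure-theoretic normal there (recall $m_d$ is independent of $\nu$), and choose $r_k\to0$ with $\eps_{n_k}/r_k\le1$; by the scaling identity, $r_k^{-(d-1)}\cF_{\eps_{n_k}}[v_{n_k};Q_\nu(x_0,r_k)]=\cF_{\eps_{n_k}/r_k}[\bar v_k;Q_\nu]$ with $\bar v_k:=v_{n_k}(x_0+r_k\,\cdot\,)$. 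After truncating $\bar v_k$ between $-1$ and $1$ and modifying it near the lateral faces of $Q_\nu$ so as to enforce the periodicity and the boundary values defining $\cA_\nu$ — a gluing that costs only $o(1)$ because $\bar v_k$ is already $L^2$-close to $\pm1$ in a neighborhood of those faces — the modified functions lie in $\cA_\nu$, whence $\cF_{\eps_{n_k}/r_k}[\bar v_k;Q_\nu]\ge m_d-o(1)$; combined with the differentiation theorem for $\sigma$ this yields density $\ge m_d$ at $x_0$.

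\emph{Limsup inequality.} By density of sets with polyhedral boundary in $BV(\Omega;\{-1,1\})$ with simultaneous convergence of perimeters, continuity of $\cF$ along such approximations, and lower semicontinuity of the $\Gamma$-$\limsup$, it suffices to construct a recovery sequence for $v=\chi_E-\chi_{\Omega\setminus E}$ with $\partial E\cap\Omega$ polyhedral and, after a further standard reduction, $\overline{\partial E\cap\Omega}$ meeting $\partial\Omega$ transversally, so that $v$ is constant near $\partial\Omega$ and the Neumann condition is automatic. Given $\delta>0$, pick $\eps_*\le1$ and $\varphi\in\cA_\nu$ with $\cF_{\eps_*}[\varphi;Q_\nu]\le m_d+\delta$; by the scaling identity with $\mu=\eps_n/\eps_*$, the rescaled profile $x\mapsto\varphi(\eps_*(x-x_0)/\eps_n)$ costs $(\eps_n/\eps_*)^{d-1}(m_d+\delta)$ on a cube of side $\eps_n/\eps_*$ centered at $x_0$ with two faces normal to $\nu$. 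Tiling each $(d-1)$-face of $\partial E\cap\Omega$ by $\approx\mathcal{H}^{d-1}(\text{face})/(\eps_n/\eps_*)^{d-1}$ such cubes, defining $v_n$ to equal these profiles there and $\pm1$ elsewhere, and noting that all gluings — across cube seams, across the polyhedral edges (absorbed by a cut-off layer of volume $o(1)$), and to the bulk — occur in regions where $v_n\equiv\pm1$ and hence keep $v_n\in W^{3,2}(\Omega)$, one obtains $v_n\to v$ in $L^2(\Omega)$ (the functions differ from $v$ only in an $O(\eps_n)$-neighborhood of $\partial E\cap\Omega$) and $\limsup_n\cF_{\eps_n}[v_n]\le(m_d+\delta)\,\mbox{Per}_\Omega(\{v=1\})$. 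Letting $\delta\to0$ and diagonalizing over the polyhedral approximation completes the proof.

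\emph{Main obstacle.} The real difficulty, pervading the $\liminf$ direction, is the sign-indefinite term $-\eps q|\nabla v|^2$: $\cF_\eps$ is neither nonnegative nor lower semicontinuous for soft reasons, so before any blow-up one must show it is equibounded from below and that its blow-up limits still dominate $m_d$. This rests on interpolation inequalities relating $\|\nabla v\|_{L^2}$ to $\|W(-\eps^2\Delta v+v)\|_{L^1}$ and the higher-order quadratic terms — using the quadratic growth of $W$ at infinity (Remark \ref{remark1}) to control $\|\nabla v\|_{L^2}$ where $v$ is large, and the $\eps$-scale smoothing of $({\bf 1}-\eps^2\Delta)^{-1}$ to see that $\eps q\int u\Delta v$ is negligible in the bulk, where $\Delta v\approx0$ — and it is precisely here that $q$ must be small and that $m_d>0$ (Proposition \ref{mdPosProp}) is needed, lest the $\Gamma$-limit degenerate. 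In the $\limsup$ direction the negative term is harmless, as it only lowers the energy, and the only genuine work is the regularity-preserving gluing, which is routine.
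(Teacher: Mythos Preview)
Your overall strategy matches the paper's: blow-up for the $\liminf$, recovery from near-optimal cell profiles for the $\limsup$, with the sign-indefinite term $-q\eps|\nabla v|^2$ controlled via interpolation for small $q$. But two steps in your $\liminf$ argument are genuine gaps, not technicalities.

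First, the truncation ``between $-1$ and $1$'' is wrong here. The potential in $\cF_\eps$ acts on $-\eps^2\Delta v+v$, not on $v$, so pointwise truncation of $v$ neither lowers the energy nor even keeps the integrand well-behaved; more fatally, truncation destroys the $W^{3,2}$ regularity required for membership in $\cA_\nu$. The paper never truncates: it interpolates the blown-up function with a mollification $\varphi_n=v_0*\Psi_{\eps_n}$ of the sharp profile via a smooth cut-off (its Proposition~\ref{glProp}).

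Second, and more seriously, your claim that the modification to land in $\cA_\nu$ ``costs only $o(1)$ because $\bar v_k$ is already $L^2$-close to $\pm1$'' is exactly where the work is. $L^2$-closeness alone gives no control on $\eps|\nabla\bar v_k|^2$, $\eps^3|\nabla^2\bar v_k|^2$, $\eps^5|\nabla\Delta\bar v_k|^2$ in the gluing region, and without such control the cut-off can cost $O(1)$ (each derivative of the cut-off brings in a factor $\eps^{-1}$). The paper handles this by a two-stage slicing argument: first (Proposition~\ref{LayerProp}) it selects, among $k$ concentric annular layers, one where the auxiliary energy $\cI_\eps$---which dominates all the derivative terms---is at most $K/k$; this uses an interior elliptic estimate to replace $|\nabla^2 v|$ by $|\Delta v|$ plus a controllable boundary-layer term. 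Then (Proposition~\ref{glProp}) it subdivides that good layer into $\lceil\eps_n^{-1}\rceil$ thinner shells and picks one where both $\cI_{\eps_n}$ and $\|w_n-\varphi_n\|_{L^2}^2$ are $O(\eps_n)$; only with this double selection does the interpolation $z_n=\eta_n w_n+(1-\eta_n)\varphi_n$ have energy $\cF_{\eps_n}[z_n;Q_\nu]\le \cF_{\eps_n}[w_n;Q_\nu]+C/k$. The layer selection is also what lets one show the limit measure is nonnegative away from $\partial^*E$ (your ``stability argument'' for that point is again not free, since $\cF_\eps[\cdot;Q(x_0,r)]$ has no intrinsic sign on interior cubes). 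Your ``Main obstacle'' paragraph correctly identifies the difficulty but underestimates what it takes to resolve it.

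Your $\limsup$ sketch is close to the paper's, with one caveat: transversality of $\partial P$ and $\partial\Omega$ does \emph{not} make $v$ constant near $\partial\Omega$; it only makes the intersection $(d-2)$-dimensional. The paper enforces the Neumann condition by multiplying by a cut-off $\eta_{\delta/8}$ vanishing near $\partial\Omega$ and showing the resulting $L^2$ and energy errors are $O(\delta)$, then diagonalizing in $\delta$.
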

\begin{remark}
We remark that Theorem \ref{GamamConvThm} and the compactness property stated in Theorem \ref{CompactnessRn} have analogous formulations for the functional  $\cFv_\eps$ in \eqref{mainFunct}.  In particular, since for $v_n :=({\bf 1} - \eps^2 \Delta)^{-1} u_n$, $\cF_{\eps_n}[v_n] = \cFv_{\eps_n}[u_n]$, the compactness property follows from \eqref{compConv} due to the fact that $\sup_n \cF_\eps^*[u_n] < \infty$ implies that $u_{n_k} = -\eps_{n_k}^2 \Delta v_{n_k} + v_{n_k} \rightarrow v$ in $L^2(\Omega)$.  Similarly, for $u_n \rightarrow v$ in $L^2(\Omega)$, inequalities \eqref{LimInf} and \eqref{Limsup} of Theorem \ref{GamamConvThm} hold with $\cF_{\eps_n}[v_n]$ replaced by $\cFv_{\eps_n}[u_n]$.
\end{remark}

We now give a proof of an elliptic regularity result used in the sequel.
\begin{proposition}
\label{EllipticEstProp}
If $\Omega$ has a piecewise $C^2$ boundary,   then there exists a constant $C(\Omega)$, depending on $\Omega$, such that
\beq
\label{EllEst}
||\nabla^2 v||_{L^2(\Omega)}^2 \le 3 ||\Delta v||_{L^2(\Omega)}^2 + C(\Omega) ||v||_{L^2(\Omega)}^2
\eeq
for all $v \in W^{2,2}(\Omega)$ such that $\frac{\partial v}{\partial \n} = 0$ on $\partial \Omega$.
\end{proposition}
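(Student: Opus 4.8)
The plan is to prove the estimate first for functions $v$ in $C^{3}(\overline{\Omega})$ satisfying the Neumann condition, and then to pass to the general case by density. For such a $v$, the starting point is the elementary pointwise identity (summation over repeated indices understood)
$$\partial_{ij}v\,\partial_{ij}v-\partial_{ii}v\,\partial_{jj}v=\partial_{j}\!\left(\partial_{i}v\,\partial_{ij}v\right)-\partial_{i}\!\left(\partial_{i}v\,\partial_{jj}v\right),$$
which, integrated over $\Omega$ and combined with the divergence theorem, yields
$$\int_{\Omega}|\nabla^{2}v|^{2}\,dx=\int_{\Omega}(\Delta v)^{2}\,dx+\int_{\partial\Omega}\Big(\langle\nabla v,\nabla^{2}v\,\n\rangle-(\Delta v)\,\tfrac{\partial v}{\partial\n}\Big)\,d\mathcal{H}^{d-1}.$$
Since $\partial v/\partial\n=0$ on $\partial\Omega$, the term $(\Delta v)\,\partial v/\partial\n$ vanishes identically, and moreover $\nabla v$ is tangential to $\partial\Omega$ on each $C^{2}$ piece. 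Differentiating the identity $\langle\nabla v,\n\rangle=0$ along a tangential direction $\tau$ gives $\langle\nabla^{2}v\,\tau,\n\rangle=-\langle\nabla v,\nabla_{\tau}\n\rangle$, and since the Weingarten map of each $C^{2}$ portion of $\partial\Omega$ is bounded (this is where piecewise $C^{2}$ enters; the edges where the pieces meet are $\mathcal{H}^{d-1}$-null and contribute nothing), the tangential component of $\nabla^{2}v\,\n$ is pointwise controlled by $C(\Omega)\,|\nabla v|$. As $\nabla v$ is tangential, this bounds the entire boundary integrand, and we obtain
$$\int_{\Omega}|\nabla^{2}v|^{2}\,dx\le\int_{\Omega}(\Delta v)^{2}\,dx+C(\Omega)\int_{\partial\Omega}|\nabla v|^{2}\,d\mathcal{H}^{d-1}.$$

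The next step is to absorb the boundary term. Applying the trace interpolation inequality on the Lipschitz domain $\Omega$ to each component $\partial_{i}v\in W^{1,2}(\Omega)$, for every $\delta>0$ there is $C_{\delta}(\Omega)>0$ with
$$\int_{\partial\Omega}|\nabla v|^{2}\,d\mathcal{H}^{d-1}\le\delta\,\|\nabla^{2}v\|_{L^{2}(\Omega)}^{2}+C_{\delta}(\Omega)\,\|\nabla v\|_{L^{2}(\Omega)}^{2}.$$
Choosing $\delta$ so small that $C(\Omega)\,\delta\le\tfrac12$ and reabsorbing the Hessian term gives $\|\nabla^{2}v\|_{L^{2}(\Omega)}^{2}\le 2\,\|\Delta v\|_{L^{2}(\Omega)}^{2}+C'(\Omega)\,\|\nabla v\|_{L^{2}(\Omega)}^{2}$. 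Finally, integrating by parts once more and using the Neumann condition, $\|\nabla v\|_{L^{2}(\Omega)}^{2}=-\int_{\Omega}v\,\Delta v\,dx\le t\,\|\Delta v\|_{L^{2}(\Omega)}^{2}+\tfrac{1}{4t}\,\|v\|_{L^{2}(\Omega)}^{2}$ for every $t>0$; taking $t$ small enough that $C'(\Omega)\,t\le1$ produces exactly the claimed inequality, the constant $3$ appearing as $2+1$. To remove the regularity assumption, I would approximate a general $v\in W^{2,2}(\Omega)$ with $\partial v/\partial\n=0$ in the $W^{2,2}$-norm by functions $v_{k}\in C^{3}(\overline{\Omega})$ that also satisfy the Neumann condition, and pass to the limit using continuity of the three $L^{2}(\Omega)$-norms involved.

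I expect the boundary analysis of the first paragraph to be the main obstacle: correctly producing the boundary term from the integration by parts, recognizing that it is precisely the Neumann condition that annihilates one part of it and renders $\nabla v$ tangential in the other, and carrying out the curvature estimate on a merely piecewise $C^{2}$ boundary. The absorption bookkeeping that yields the specific constant $3$ is routine, and the density step, while requiring a little attention near the edges of $\partial\Omega$, is standard.
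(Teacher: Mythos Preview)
Your argument is correct and follows essentially the same route as the paper's proof: the paper obtains the inequality $\|\nabla^2 v\|_{L^2}^2\le\|\Delta v\|_{L^2}^2+C_1(\Omega)\|\nabla v\|_{L^2(\partial\Omega)}^2$ by citing Grisvard's Theorem~3.1.1.2 (whose proof is precisely your curvature computation), then applies the same trace interpolation (Grisvard's Theorem~1.5.1.10), the same absorption to reach the factor $2$, and the same integration-by-parts estimate $\|\nabla v\|_{L^2}^2=-\int v\,\Delta v$ with Young's inequality to bring the constant to $3$. The only difference is that by quoting Grisvard the paper works directly in $W^{2,2}$ and avoids your final density step.
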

\begin{proof}
Theorem 3.1.1.2 from \cite{grisvard2011elliptic} yields
\beq
\label{GrisvardIneq}
 \int_\Omega |\nabla^2 v|^2  dx   \le  \int_\Omega |\Delta v|^2 dx + C_1(\Omega) \int_{\partial \Omega} |\nabla v|^2 dx
\eeq
for all $v \in W^{2,2}(\Omega)$ with $\frac{\partial v}{\partial \n} = 0$ on $\partial \Omega$, where the constant $C_1(\Omega)$ depends only on the curvature of $\partial \Omega$.
In turn, applying Theorem 1.5.1.10 from \cite{grisvard2011elliptic} to each component of $\nabla v$ we obtain
\beq
C_1(\Omega) \int_{\partial \Omega} |\nabla v|^2 dx \le \frac{1}{2}  \int_\Omega |\nabla^2 v|^2 dx + C_2(\Omega)  \int_\Omega |\nabla v|^2 dx \nonumber
\eeq
for some $C_2(\Omega) > 0$ and for all $v \in W^{2,2}(\Omega)$.  This, together with \eqref{GrisvardIneq}, reduces to
\beq
\label{GrisvardIneq2}
 \int_\Omega |\nabla^2 v|^2  dx   \le  2 \int_\Omega (\Delta v)^2 dx + 2 C_2(\Omega) \int_{\Omega} |\nabla v|^2 dx.
\eeq
Finally, using the Neumann boundary condition and integration by parts we conclude that
\beq
\label{GrisvardIneq3}
2 C_2(\Omega) \int_\Omega |\nabla v|^2 dx = 2C_2(\Omega) \int_\Omega (-\Delta v) v dx \le \int_\Omega (\Delta v)^2 dx + C(\Omega) \int_\Omega v^2 dx,
\eeq
where in the last step we also used Young's Inequality.  Inequalities \eqref{GrisvardIneq2} and \eqref{GrisvardIneq3} now imply \eqref{EllEst}.
\end{proof}

\begin{figure}
\begin{center}
\includegraphics[height=2in]{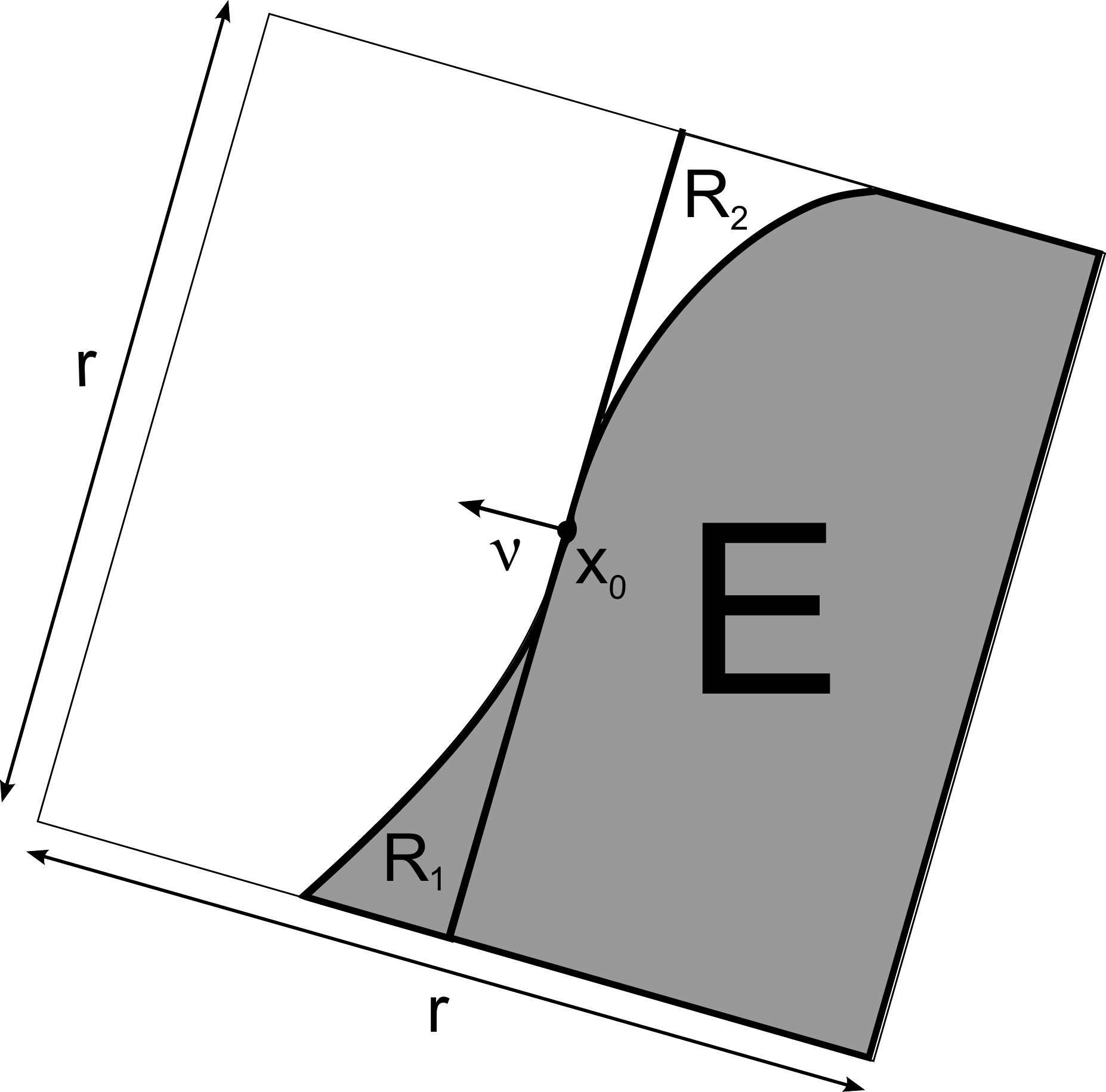}
\vskip 0.1in
\caption{The sets E (in grey), $R_1$, and $R_2$.}
\label{experiment}
\end{center}
\end{figure}

For the reader's convenience we end the section with a summary of  standard measure-theoretic results used in the remainder.
A key concept used in the development of the Liminf Inequality in Section \ref{LiminfSection} is that of a reduced boundary of the set $E := \{x \in \Omega: v(x) = 1\}$ associated to $v \in BV(\Omega; \{-1,1\})$.  We recall that $v \in L^1(\Omega)$ is said to be of bounded variation, $v \in BV(\Omega)$, if the generalized partial derivatives $D_i$ of $v$ in the sense of distributions are bounded Radon measures. In particular $BV(\Omega; \{-1, 1\})$ denotes functions of bounded variation taking values in the set $\{-1,1\}$, and $\text{Per}_\Omega(E) := |D \chi_{E}|(\Omega) < \infty$.

For sets of finite perimeter the reduced boundary $\partial^* E$ of $E$ is defined as the set of points $x_0 \in \text{spt}|D\chi_E| \cap \Omega$ such that the limit
\beq
\nu(x_0) := -\lim_{r \rightarrow 0^+} \frac{D \chi_E(B_r(x_0))}{|D \chi_E|(B_r(x_0))} \nonumber
\eeq
exists and satisfies $|\nu(x_0)| = 1$. Here $B_r(x_0)$ is the open ball of radius $r$ centered at $x_0$.   For $x_0 \in \partial^* E$ the vector $\nu(x_0)$ is called the generalized outer unit normal to $E$.  In particular, by Theorem 3.59 from {\cite{ambrosio2000functions}, $|D \chi_E| = \cH^{d-1} \mres \partial^* E$, and for $x_0 \in \partial^* E$,
\beq
\label{MT1}
\lim_{r \rightarrow 0^+} \frac{\cH^{d-1} (Q_{\nu}(x_0, r) \cap \partial^* E)}{r^{d-1}} = 1, \nonumber
\eeq
\beq
\label{MT2}
\lim_{r \rightarrow 0} \frac{1}{r^d} |R_1| = 0, \quad
\lim_{r \rightarrow 0} \frac{1}{r^d} |R_2| = 0,
\eeq
where
\beq
R_1 := \{x \in Q_\nu(x_0, r) \cap E: (x-x_0) \cdot \nu(x_0) > 0 \},
\eeq
\beq
R_2 := \{x \in Q_\nu(x_0, r)\backslash E: (x-x_0) \cdot \nu(x_0) < 0 \},
\eeq
and $|\cdot|$ denotes the Lebesgue measure in $\mathbb{R}^d$.
\section{Compactness}
In this section we prove the compactness Theorem \ref{CompactnessRn}. We  use the following interpolation inequality.
\begin{proposition}
\label{interpolProp}
Let $A \subset \mathbb{R}^d$ be  a bounded open set in $\R^d$.  Assume, in addition, that either $A$ has a $C^1$ boundary or that $A$ can be written as the union of  finitely many pairwise disjoint open rectangles and a set of Lebesgue measure zero. Then there exist a constant $q_* \in (0,1)$, independent of $A$, and $\eps_0 = \eps_0(A, q_*) > 0$ such that
\beq
\label{interpol}
q_* \int_A \eps |\nabla v|^2 dx \le \int_A \left( \frac{\W(v)}{\eps} + \eps^3 |\nabla^2 v|^2 \right) dx
\eeq
for every $\eps \in (0, \eps_0)$ and $v \in W^{2,2}(A)$.
\end{proposition}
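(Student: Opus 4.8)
The plan is to reduce, by scaling, to a family of interpolation inequalities on sets of unit size, to prove the decisive one --- with no additive constant --- on a fixed cube by a compactness argument, to upgrade it to boxes of bounded aspect ratio, and finally to patch things together over $A$ via a decomposition into boxes of side comparable to $\eps$. For the scaling step, writing $v(x)=w(x/\eps)$ and $B:=A/\eps$, a change of variables turns \eqref{interpol}, after dividing by $\eps^{d-1}$, into the equivalent inequality
\[
q_*\int_B|\nabla w|^2\,dy\ \le\ \int_B\bigl(W(w)+|\nabla^2 w|^2\bigr)\,dy,\qquad w\in W^{2,2}(B);
\]
so it suffices to prove this uniformly as $B$ ranges over the dilations $A/\eps$ of $A$ by large factors, the hypothesis $\eps<\eps_0(A)$ serving only to make ``large'' precise.

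The core is the estimate on the open unit cube $Q$: there is a constant $C=C(d,W)$ with
\[
\int_Q|\nabla w|^2\ \le\ C\Bigl(\int_Q|\nabla^2 w|^2+\int_Q W(w)\Bigr),\qquad w\in W^{2,2}(Q).
\]
The absence of an additive constant is essential --- an $O(1)$ term would, after scaling and summing over the $\sim|A|\eps^{-d}$ cubes tiling $A$, reappear as an uncontrolled $O(|A|/\eps)$ contribution --- and since the left side is quadratic in $w$ while $W(w)$ is not, one cannot normalise freely, so I would argue by contradiction. If the estimate fails there are $w_n\in W^{2,2}(Q)$ with $\int_Q|\nabla w_n|^2>n\,D_n$, where $D_n:=\int_Q|\nabla^2 w_n|^2+\int_Q W(w_n)>0$. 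The Gagliardo--Nirenberg inequality $\|\nabla w\|_{L^2(Q)}^2\le C_1\bigl(\|\nabla^2 w\|_{L^2(Q)}^2+\|w\|_{L^2(Q)}^2\bigr)$ and the quadratic growth of $W$ at infinity (Remark~\ref{remark1}) give $\|\nabla w_n\|_{L^2(Q)}^2\le C'D_n+C''$, which together with $\|\nabla w_n\|_{L^2(Q)}^2>n\,D_n$ forces $D_n\to0$ and hence $\{w_n\}$ bounded in $W^{2,2}(Q)$. Along a subsequence $w_n\to w$ in $W^{1,2}(Q)$ with $\nabla^2 w_n\to0$ in $L^2$, so $w$ is affine; as $W(w)=0$ a.e.\ by Fatou and Hypotheses~\ref{assume1}(1)--(2), $w$ equals one of the constants $\pm1$, whence $w_n\to\pm1$ strongly in $W^{2,2}(Q)$, and therefore in $L^{p_0}(Q)$ for some $p_0>2$. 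Taking $w\equiv1$ (the case $-1$ being symmetric), the ``wrong-well'' set $\{w_n<0\}$ has measure $\le\|w_n-1\|_{L^{p_0}(Q)}^{p_0}\lesssim\bigl(\|w_n-1\|_{L^2(Q)}+D_n^{1/2}\bigr)^{p_0}$; combining this with $W(s)\ge c_w(s\mp1)^2$ for $\pm s\ge0$ (Hypotheses~\ref{assume1}(3)) to get $\|w_n-1\|_{L^2(Q)}^2\lesssim D_n$ --- the contribution of $\{w_n<0\}$ being of higher order since $p_0>2$ --- and feeding this back into the Gagliardo--Nirenberg inequality yields $\int_Q|\nabla w_n|^2\le\widetilde C\,D_n$ for $n$ large, contradicting $\int_Q|\nabla w_n|^2>n\,D_n$.

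Granting the cube estimate, a covering argument extends it to boxes of bounded aspect ratio: a box $R$ with all side lengths in $[r,Mr]$ can be covered by boundedly many sub-cubes of side $r$ lying in $R$ and of bounded overlap, so summing the (rescaled) cube estimate gives $\int_R|\nabla w|^2\le C(d,M,W)\bigl(r^2\int_R|\nabla^2 w|^2+r^{-2}\int_R W(w)\bigr)$. Now decompose $A$: if $A$ is a finite union of rectangles, for $\eps$ smaller than the shortest edge I split each edge into $\lceil(\text{length})/\eps\rceil$ equal parts, tiling $A$ up to a null set by boxes of side in $[\eps/2,\eps]$ and aspect ratio $\le2$; if $A$ has $C^1$ boundary, I tile the bulk by $\eps$-cubes and cover an $O(\eps)$-collar of $\partial A$, nearly flat at scale $\eps$, by $O(\eps)$-boxes of uniformly bounded aspect ratio and uniformly bounded overlap. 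In either case, applying the box estimate with $r\sim\eps$, multiplying by $\eps$ and summing gives $\eps\int_A|\nabla v|^2\le C(d,W)\int_A\bigl(\eps^3|\nabla^2 v|^2+W(v)/\eps\bigr)$; taking $q_*:=1/C(d,W)$, enlarging $C(d,W)$ if necessary so that $q_*\in(0,1)$, yields \eqref{interpol}.

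I expect the cube estimate of the second paragraph to be the main obstacle: because $W(w)$ is not homogeneous one cannot simply normalise $\|\nabla w_n\|_{L^2}=1$, so the compactness argument has to be organised so that the extracted affine limit is pinned to a single well and the measure of the wrong-well set is controlled of exactly the right order, which is where the gain $p_0>2$ in the Sobolev embedding is used. The boundary collar in the $C^1$ case is routine but slightly technical.
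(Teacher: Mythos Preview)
The paper does not actually prove this proposition; its entire proof is the sentence ``See Theorem 1.2 in \cite{chermisi2010singular}.'' So there is no argument within the paper to compare yours against --- you have supplied an independent proof sketch. Your strategy (reduce by scaling to the constant-free inequality $\int_Q|\nabla w|^2\le C\bigl(\int_Q|\nabla^2 w|^2+\int_Q W(w)\bigr)$ on the unit cube, prove that by compactness, then patch over $A$ by a covering into $\eps$-boxes) is the natural one, and your cube argument is correct: the passage $D_n\to0$, the affine limit pinned to a single well via Fatou and Hypotheses~\ref{assume1}(1)--(2), and the bootstrap absorbing the wrong-well set using the strict Sobolev gain $p_0>2$ all go through as written.

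One point you flag as ``routine but slightly technical'' is in fact a small gap. For $A$ with curved $C^1$ boundary you assert that the $O(\eps)$-collar can be covered by $O(\eps)$-boxes of bounded aspect ratio \emph{lying inside $A$} with bounded overlap. This is false as stated: a box with sides of order $\eps$ and one face tangent to a boundary of curvature $\kappa$ must sit at distance $\gtrsim\kappa\eps^2$ from $\partial A$ (the sagitta of the arc over a chord of length $\sim\eps$), so an inner strip of width $\sim\eps^2$ is always missed, and shrinking the boxes to reach it forces the aspect ratio to diverge. The standard fix is not a finer covering but an extension of $v$ across $\partial A$, allowing the $\eps$-cubes to protrude into the extension layer; the extra contributions from the layer are lower order (in particular a term $O(\eps^2)\int_A|\nabla v|^2$ that can be absorbed into the left-hand side for $\eps$ small). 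Alternatively one flattens $\partial A$ locally and reduces to the rectangular case, though that route needs $C^{1,1}$ regularity to control the second derivatives of the flattening map. Either repair is standard, but the bare covering you describe does not close the $C^1$ case on its own.
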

\begin{proof}
See Theorem 1.2 in \cite{chermisi2010singular}.
\end{proof}
For every open set $A \subset \Omega$, $v \in W^{3,2}(\Omega)$, and $\eps > 0$, define the functional
\beq
\label{cIDef}
\cI_\eps[v;A] := \int_A \left( \frac{1}{\eps} W(v) + \eps |\nabla v|^2 + \eps^3 |\nabla^2 v|^2 + \eps^5 |\nabla \Delta v|^2 \right) dx. \nonumber
\eeq
\begin{remark}
We note that in the energy $\cF_\eps[v]$ the potential $W$ acts on $u$, which is related to $v$ through the condition $u = -\eps^2 \Delta v + v$, while in $\cI_\eps[v]$ the potential acts on $v$. Hence $\cF_\eps$ differs from the standard Cahn-Hilliard energies involving solely  the potential $W(v)$. In addition, the second order term in $\cF_\eps[v]$ involves the Laplacian $\Delta v$, while the second order term in $\cI_\eps[v]$ involves the Hessian $\nabla^2 v$.
\end{remark}
Next, we prove a result that will be useful to bound the energy from below and to obtain compactness of energy bounded sequences (see Theorem \ref{CompactnessRn}).
\begin{proposition}
\label{LBThm}
Let $K_w, C_w, c_w, q_*, \eps_0 > 0$ be the constants given in Hypotheses \ref{assume1} and Proposition \ref{interpol}. Then there exist $q_0 > 0$, depending only on $K_w, C_w, q_*$  (see \eqref{qBar}), and $\eps_1 > 0$, depending only on $C_w$, such that for every $0 < q \le q_0$, $v \in W^{3,2}(\Omega)$, and $0 < \eps < \eps_1$,
\beq
\label{FLB}
\cF_{\eps}[v]   \ge  q \, \cI_{\eps}[v; \Omega]  - \frac{12q}{q_*} C(\Omega) \eps^3 |\Omega|
\eeq
for some constant $C(\Omega) > 0$.
\end{proposition}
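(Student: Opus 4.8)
The plan is to use that the claimed lower bound carries the small factor $q$: each comparison we make may cost a fixed (possibly large, $W$-dependent) constant times one of the nonnegative terms already present in $\cF_\eps[v]$ and still be absorbed once $q$ is small. First, if $v\in W^{3,2}(\Omega)$ does not satisfy $\partial v/\partial\n=0$ on $\partial\Omega$ there is nothing to prove, since $\cF_\eps[v]=+\infty$; so assume the Neumann condition, put $u:=-\eps^2\Delta v+v$, and work with the representation
\[
\cF_\eps[v]=\int_\Omega\Bigl(\tfrac1\eps W(u)-\eps q|\nabla v|^2+(1-2q)\eps^3(\Delta v)^2+(1-q)\eps^5|\nabla\Delta v|^2\Bigr)\,dx .
\]
The conceptual heart is the comparison of $W(u)$ and $W(v)$, which differ because $u-v=-\eps^2\Delta v$. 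By Hypotheses \ref{assume1}(4), $\sqrt W$ is Lipschitz with constant $C_w/2$ (since $(\sqrt W)'=W'/(2\sqrt W)$ and $|W'|\le C_w\sqrt W$), so $|\sqrt{W(u)}-\sqrt{W(v)}|\le\tfrac{C_w}{2}\eps^2|\Delta v|$; squaring and using $W\ge0$ gives, after Young's inequality, the pointwise bound $W(u)\ge W(v)-C_w\eps^2|\Delta v|\sqrt{W(v)}\ge\tfrac12 W(v)-\tfrac{C_w^2}{2}\eps^4(\Delta v)^2$. (A Taylor expansion of $W$ with $|W''|\le K_w$ gives the same with $C_w^2$ replaced by $C_w^2+K_w$, which is why both constants enter the threshold for $q_0$.) Dividing by $\eps$ and integrating,
\[
\int_\Omega\tfrac1\eps W(v)\,dx\ \le\ 2\int_\Omega\tfrac1\eps W(u)\,dx+C_w^2\int_\Omega\eps^3(\Delta v)^2\,dx .
\]

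Next I would bound $q\,\cI_\eps[v;\Omega]$ from above term by term. The term $\eps q|\nabla v|^2$ of $\cI_\eps$ together with $-\eps q|\nabla v|^2$ in $\cF_\eps$ forces control of $2q\int_\Omega\eps|\nabla v|^2$, which is exactly what the interpolation inequality \eqref{interpol} of Proposition \ref{interpolProp} provides: $q_*\int_\Omega\eps|\nabla v|^2\le\int_\Omega\bigl(\tfrac1\eps W(v)+\eps^3|\nabla^2 v|^2\bigr)\,dx$, valid for $\eps<\eps_0$ (this smallness is absorbed into $\eps_1$). Since $\cF_\eps$ carries $\eps^3(\Delta v)^2$ while $\cI_\eps$ carries the full Hessian $\eps^3|\nabla^2 v|^2$, I would use Proposition \ref{EllipticEstProp} to pass between them, $\int_\Omega\eps^3|\nabla^2 v|^2\le 3\int_\Omega\eps^3(\Delta v)^2+C(\Omega)\int_\Omega\eps^3 v^2$, and then invoke Hypotheses \ref{assume1}(3), which gives $v^2\le\tfrac2{c_w}W(v)+2$ (the quadratic growth of Remark \ref{remark1}), to write $\int_\Omega\eps^3 v^2\le\tfrac{2\eps^4}{c_w}\int_\Omega\tfrac1\eps W(v)+2\eps^3|\Omega|$. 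Feeding the estimate from the previous paragraph into all of this, each of $q\int_\Omega\tfrac1\eps W(v)$, $2q\int_\Omega\eps|\nabla v|^2$, $q\int_\Omega\eps^3|\nabla^2 v|^2$, $q\int_\Omega\eps^5|\nabla\Delta v|^2$ is controlled by a coefficient of order $q$ times one of the nonnegative quantities $\int_\Omega\tfrac1\eps W(u)$, $\int_\Omega\eps^3(\Delta v)^2$, $\int_\Omega\eps^5|\nabla\Delta v|^2$, plus a remainder bounded by $\tfrac{12q}{q_*}C(\Omega)\eps^3|\Omega|$.

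It remains to collect terms. In $\cF_\eps[v]$ the three nonnegative quantities above carry coefficients $1$, $1-2q$, $1-q$, each $\ge\tfrac12$ once $q$ is small, while the coefficients produced in the previous step are all $O(q)$ --- the decisive point being that the fixed constants $C_w^2$, $C_w^2+K_w$, $3$, $1/q_*$, $2/c_w$ occur only multiplied by $q$, because one never needs more than a $q$-fraction of the potential or of the Hessian. Choosing $q_0=q_0(K_w,C_w,q_*)$ small enough that all these $O(q)$ coefficients are dominated, and $\eps_1=\eps_1(C_w)$ small enough that $\eps<\eps_0$ and the residual powers of $\eps$ are harmless, one obtains \eqref{FLB}. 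I expect the only genuine difficulty to be this bookkeeping: one must check, at each occurrence, that the error from the $W(u)$--$W(v)$ comparison and from the Hessian-to-Laplacian passage is either an $O(q)$ multiple of a nonnegative term already in $\cF_\eps$ or of the exact shape $q\,C(\Omega)\eps^3|\Omega|$, so that the domain-dependence stays confined to the error and $q_0,\eps_1$ remain independent of $\Omega$.
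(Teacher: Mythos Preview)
Your proposal is correct and follows essentially the same route as the paper: compare $W(u)$ with $W(v)$ up to an $\eps^3(\Delta v)^2$ error, use the interpolation inequality \eqref{interpol} to control $\eps|\nabla v|^2$, invoke Proposition \ref{EllipticEstProp} to pass from $|\nabla^2 v|^2$ to $(\Delta v)^2+v^2$, absorb $v^2$ via the quadratic lower bound on $W$, and then choose $q_0$ small so all error coefficients of order $q$ are dominated by the coefficients $1$, $1-2q$, $1-q$ in $\cF_\eps$. The only cosmetic difference is the $W(u)$--$W(v)$ comparison: the paper keeps a small fraction $\theta$ of $W(u)$, Taylor-expands, and uses Young with $|W'|\le C_w\sqrt W$ and $|W''|\le K_w$, whereas you use the $C_w/2$-Lipschitz property of $\sqrt W$ directly; both yield the same type of bound $W(u)\ge c\,W(v)-C\eps^4(\Delta v)^2$ and feed into the identical bookkeeping.
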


\begin{proof}
If $v$ does not satisfy $\frac{\partial v}{\partial \n} = 0$ on $\partial \Omega$ then $\cF_\eps[v] = \infty$ and there is nothing to prove.  Otherwise,
fix $0 < \theta \le 1$. 
Using Taylor's formula for $W$ and the fact that $W''$ is bounded by Hypotheses \ref{assume1}, yields
\begin{align}
\label{24}
\cF_{\eps}[v] & = \cF_{\eps}[v; \Omega] = \int_\Omega \left( \frac{1}{\eps} W(-\eps^2 \Delta v + v) - \eps q |\nabla v|^2 + (1-2q) \eps^3 (\Delta v)^2 + (1-q) \eps^5 |\nabla \Delta v|^2 \right) dx \nonumber \\
& \ge  \int_\Omega \Big( \frac{\theta}{\eps}  W(v) - \theta W'(v)\eps \Delta v -  \eps q |\nabla v|^2 + \Big(1- 2q - \frac{\theta}{2} K_w \Big) \eps^3 (\Delta v)^2 + (1-q) \eps^5 |\nabla \Delta v|^2  \Big) dx. \nonumber \\
\end{align}
By Young's Inequality and the condition $|W'(s)| \le C_w \sqrt{W(s)}$ from Hypotheses \ref{assume1}, we have
\beq
\label{25}
W'(v)  \Delta v \le \frac{1}{2 \eps^2 C_w^2} (W'(v))^2 + \frac{\eps^2}{2} C_w^2 (\Delta v)^2 \le \frac{1}{2\eps^2} W(v) + \frac{\eps^2}{2} C_w^2 (\Delta v)^2.
\eeq
Substituting \eqref{25} into \eqref{24} implies
\beq
\label{LBInt}
\cF_{\eps}[v]  \ge \int_\Omega \left( \frac{\theta}{2\eps} W(v) - \eps q |\nabla v|^2  + \left(1 - 2q - \frac{\theta}{2} K_w - \frac{\theta}{2} C_w^2 \right)  \eps^3 (\Delta v)^2  + (1-q) \eps^5 |\nabla \Delta v|^2  \right) dx. \nonumber
\eeq
Multiplying \eqref{interpol}, with $A=\Omega$, by $2q/q_*$ and using it in the previous inequality gives
\begin{align}
\label{FLB3}
\cF_{\eps}[v]   & \ge  \int_\Omega \Bigg( \left(\frac{\theta}{2} - \frac{2q}{q_*} \right) \frac{1}{\eps} W(v) + \eps q |\nabla v|^2 +  \left(1 - 2q - \frac{\theta}{2} K_w - \frac{\theta}{2} C_w^2 \right)  \eps^3 (\Delta v)^2 \nonumber \\
& -  \frac{2q \eps^3}{q_*} |\nabla^2 v|^2 + (1-q) \eps^5 |\nabla \Delta v|^2   \Bigg) dx. 
\end{align}
Fix $\delta > 0$.  Using Proposition \ref{EllipticEstProp} we get
\begin{align}
\cF_\eps[v] & \ge \int_\Omega \Bigg( \left(\frac{\theta}{2} - \frac{2q}{q_*} \right) \frac{1}{\eps} W(v) - \left( \delta + \frac{2q}{q_*}\right) \eps^3 C(\Omega) v^2 + \eps q |\nabla v|^2  \nonumber \\
& + \left(1-2q-\frac{\theta}{2} K_w - \frac{\theta}{2} C_w^2 - \frac{6q}{q_*} - 3 \delta \right) \eps^3 (\Delta v)^2 + \delta \eps^3 |\nabla^2 v|^2  + (1-q) \eps^5 |\nabla \Delta v|^2 \Bigg). \nonumber
\end{align}
Finally, it follows from Hypotheses \ref{assume1} that $W(s) \ge (c_w/4) s^2$ for $|s| \ge 2$.
Hence
\begin{align}
\cF_\eps[v] & \ge \int_\Omega \Bigg( \left[ \frac{\theta}{2} - \frac{2q}{q_*} - \eps^4 \frac{4 C(\Omega)}{c_w} \left( \delta + \frac{2q}{q_*}\right) \right] \frac{1}{\eps} W(v) + \left(1 - 2q - \frac{\theta}{2} K_w - \frac{\theta}{2} C_w^2 - \frac{6q}{q_*} - 3 \delta \right) \eps^3 (\Delta v)^2  \nonumber \\
& + \eps q |\nabla v|^2 + \delta \eps^3 |\nabla^2 v|^2 + (1-q) \eps^5 |\nabla \Delta v|^2 \Bigg) dx - 4 \left( \delta + \frac{2q}{q_*} \right) \eps^3 C(\Omega) |\Omega|. \nonumber
\end{align}
Choosing $\delta := \frac{q}{q_*}$, $\theta := \frac{8q}{q_*}$, $\eps_1 := \min\left\{\eps_0, \left( \frac{c_w}{12 C(\Omega)} \right)^{1/4}\right\}$ and 
\beq
\label{qBar}
q_0 := \frac{q_*}{2q_* + 4K_w + 4 C_w^2 + 10}
\eeq
yields \eqref{FLB}.
\end{proof}
We now prove that for $q$ sufficiently small the ``cell'' energy is positive.  
\begin{proposition}
\label{mdPosProp}
Let $m_d$ be defined in \eqref{eq:md} and let $q_0$ be as in Proposition \ref{LBThm}.  Then $m_d > 0$ for every $
0 < q <  q_0. 
$
\end{proposition}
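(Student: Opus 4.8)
The plan is to prove the stronger statement that $\cF_\eps[v;Q_\nu]\ge 2c_W\,q$ for a constant $c_W>0$ depending only on $W$, uniformly over $v\in\cA_\nu$ and over $0<\eps\le1$; since by \eqref{eq:md} $m_d$ is the infimum of these quantities, this gives $m_d>0$. The starting observation is that for $v\in\cA_\nu$ the restriction of $v$ to $Q_\nu$ satisfies the exact identity
\[
\int_{Q_\nu}|\nabla^2 v|^2\,dx=\int_{Q_\nu}(\Delta v)^2\,dx .
\]
Indeed, since $v$ is constant in a neighbourhood of the two faces of $Q_\nu$ orthogonal to $\nu$ and is periodic in the directions $\nu_1,\dots,\nu_{d-1}$, integrating by parts twice in $\int_{Q_\nu}\partial_{ij}v\,\partial_{ij}v\,dx$ leaves only boundary terms which vanish on the faces orthogonal to $\nu$ (where all derivatives of $v$ are zero) and cancel in opposite pairs on the lateral faces (by periodicity); equivalently, $v$ extends by even reflection across the two faces orthogonal to $\nu$ and by the given periodicity to a $W^{3,2}$ function on a flat torus, on which the identity is immediate.

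Next I would rerun the chain of estimates in the proof of Proposition \ref{LBThm} with $\Omega$ replaced by $Q_\nu$ and $v\in\cA_\nu$: Taylor's formula for $W(-\eps^2\Delta v+v)$ about $v$ together with the bound on $W''$, then Young's inequality with $|W'|\le C_w\sqrt{W}$, then the interpolation inequality of Proposition \ref{interpolProp} with $A=Q_\nu$. The only place in the proof of Proposition \ref{LBThm} that uses the Neumann condition on all of $\partial\Omega$ is the appeal to Proposition \ref{EllipticEstProp}; here I would replace it by the identity above, which removes the $\eps^3$-order remainder entirely. Keeping the choice $\theta=8q/q_*$ and $q_0$ exactly as in \eqref{qBar}, all resulting coefficients are nonnegative for $0<q<q_0$, and one obtains
\[
\cF_\eps[v;Q_\nu]\ \ge\ q\int_{Q_\nu}\Bigl(\tfrac1\eps W(v)+\eps|\nabla v|^2\Bigr)\,dx
\]
for every $v\in\cA_\nu$ and every $0<\eps<\eps_0(Q_\nu,q_*)$. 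The Modica--Mortola slicing bound then closes this regime: setting $G(t):=\int_0^t\sqrt{W(s)}\,ds$, one has $\tfrac1\eps W(v)+\eps|\nabla v|^2\ge 2\sqrt{W(v)}\,|\partial_\nu v|=2|\partial_\nu(G\circ v)|$ pointwise, and integrating along the lines parallel to $\nu$, each of which carries a transition from $v=1$ to $v=-1$ by the definition of $\cA_\nu$, gives $\int_{Q_\nu}(\tfrac1\eps W(v)+\eps|\nabla v|^2)\,dx\ge 2|G(1)-G(-1)|=2\int_{-1}^1\sqrt{W(s)}\,ds=:2c_W>0$ because $W>0$ on $(-1,1)$. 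Hence $\cF_\eps[v;Q_\nu]\ge 2c_W\,q$ whenever $0<\eps<\eps_0(Q_\nu,q_*)$.

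It remains to treat the range $\eps_0(Q_\nu,q_*)\le\eps\le1$ (if nonempty), and I expect this to be the only delicate point. For such $\eps$ the interpolation inequality is unavailable, so instead I would absorb $-\eps q\int_{Q_\nu}|\nabla v|^2$ using integration by parts, $\int_{Q_\nu}|\nabla v|^2=-\int_{Q_\nu}v\,\Delta v\le\|v\|_{L^2(Q_\nu)}\|\Delta v\|_{L^2(Q_\nu)}$ (the boundary terms vanishing as before), Young's inequality, and the quadratic growth from Hypotheses \ref{assume1} in the form $\int_{Q_\nu}v^2\le\tfrac4{c_w}\int_{Q_\nu}W(v)+4$. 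Combining this with the elementary a priori bound $\int_{Q_\nu}|\nabla v|^2\ge4$ for $v\in\cA_\nu$ (again by slicing: each unit-length line parallel to $\nu$ realizes the transition between $1$ and $-1$), which forces $\int_{Q_\nu}(\Delta v)^2\ge 4c_w\big/\bigl(\int_{Q_\nu}W(v)+c_w\bigr)$, one checks that the $\tfrac1\eps W(v)$ term and the $\eps^3(\Delta v)^2$ term of $\cF_\eps[v;Q_\nu]$ cannot be simultaneously small; since $\eps$ is now bounded below, this yields $\cF_\eps[v;Q_\nu]\ge c_1>0$ with $c_1$ depending only on $W$, provided $q$ is small (if necessary one replaces $q_0$ by the minimum of \eqref{qBar} and this threshold, which still depends only on $W$). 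Putting the two regimes together gives $m_d\ge\min\{2c_W\,q,\,c_1\}>0$. The reflection identity and the small-$\eps$ estimate are essentially a localized version of Proposition \ref{LBThm}; the real obstacle is obtaining a lower bound that is uniform over the full range of $\eps$ admitted in the cell formula \eqref{eq:md}.
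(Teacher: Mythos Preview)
Your treatment of the regime $0<\eps<\eps_0$ is correct and essentially identical to the paper's: you use the exact identity $\int_{Q_\nu}|\nabla^2 v|^2\,dx=\int_{Q_\nu}(\Delta v)^2\,dx$ for $v\in\cA_\nu$ in place of Proposition~\ref{EllipticEstProp}, rerun the estimates of Proposition~\ref{LBThm} on $Q_\nu$ to obtain $\cF_\eps[v;Q_\nu]\ge q\,\cI_\eps[v;Q_\nu]$ with no error term, and close with the Modica--Mortola slicing bound. This is exactly what the paper does.

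Where you deviate is in handling the range $\eps_0\le\eps\le 1$, and here you miss the paper's key simplification. The paper observes that this range can be discarded entirely: without loss of generality the infimum in \eqref{eq:md} may be taken over $0<\eps<\eps_0$. The justification is a scaling argument. Given $v\in\cA_\nu$ and $0<\eps\le 1$, pick $N\in\mathbb N$ and set $v'(y):=v(Ny)$ for $|y\cdot\nu|\le 1/(2N)$, extended by $\pm 1$ to the rest of $Q_\nu$. Then $v'\in\cA_\nu$ (its period in the $\nu_i$-directions is $1/N$, hence also $1$), and a change of variables together with the $\nu_i$-periodicity gives $\cF_{\eps/N}[v';Q_\nu]=\cF_\eps[v;Q_\nu]$. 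Choosing $N$ with $1/N<\eps_0$ shows that the infimum over $0<\eps\le 1$ coincides with the infimum over $0<\eps<\eps_0$. With this reduction in hand, the small-$\eps$ estimate already yields $m_d>0$ for every $0<q<q_0$ with the \emph{exact} $q_0$ of \eqref{qBar}.

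Your direct argument for $\eps_0\le\eps\le 1$, by contrast, is only sketched and---as you yourself note---may force replacing $q_0$ by a smaller threshold. Since the proposition is stated for the specific $q_0$ of Proposition~\ref{LBThm}, this does not prove what is claimed. The scaling reduction removes the difficulty in one line.
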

\begin{proof}
Without loss of generality we may assume that the infimum in the definition of $m_d$ is taken over $0 < \eps < \eps_0$.  
The result of the proposition then follows if we show that
\beq
\label{mdPos}
\inf \left\{ \int_Q \left( \frac{W(v)}{\eps} + \eps |\nabla v|^2 \right) dx: 0 < \eps < \eps_0, v \in \cA \right\} > 0.
\eeq
Indeed, let $v \in \cA$.  Since $v$ satisfies periodic boundary conditions on $Q$, integration by parts yields
\beq
\label{elliptEq}
||\nabla^2 v||_{L^2(Q)}^2 = ||\Delta v||_{L^2(Q)}^2.
\eeq
Repeating the proof of Proposition \ref{LBThm}  with $Q$ instead of $\Omega$ and using \eqref{elliptEq} in \eqref{FLB3}, we obtain
\beq
\label{cFLB5}
\cF_{\eps}[v; Q] \ge q\, \cI_{\eps}[v; Q] \ge q \int_Q \left( \frac{W(v)}{\eps} + \eps |\nabla v|^2 \right) dx \nonumber
\eeq
if $q \le q_0$.
To prove \eqref{mdPos} we follow \cite{fonseca1989gradient}. In particular, for $v \in \cA$,
\beq
\label{mmtrick}
 \int_Q \left( \frac{W(v)}{\eps} + \eps |\nabla v|^2 \right) dx \ge 2 \int_Q \sqrt{W(v)} |\nabla v| dx \ge \int_{Q'} \int_{-1/2}^{1/2} \sqrt{W(v)} \left| \frac{\partial v}{\partial x_d} \right| d x_d d x',
\eeq
where $Q' := (-1/2, 1/2)^{d-1}$.  Since $v(x', \pm 1/2) = \pm 1$ a change of variables yields
\beq
 \int_{Q'} \int_{-1/2}^{1/2} \sqrt{W(v)} \left| \frac{\partial v}{\partial x_d} \right| d x_d \, d x' \ge \int_{-1}^{1} \sqrt{W(s)}  \, ds. \nonumber
\eeq
Using this lower bound in \eqref{mmtrick} and taking the infimum over $v \in \cA$ and $0 < \eps < \eps_0$ gives \eqref{mdPos}.
\end{proof}
\begin{proof}[Proof of Theorem \ref{CompactnessRn}]
By Proposition \ref{LBThm} and \eqref{compUB} 
\beq
\label{IFuncBound}
\sup_n \ \cI_{\eps_n}[v_n; \Omega]< \infty.
\eeq
Hence, the Modica-Mortola energy, $\cA_\eps[v_n]$, of $v_n$ defined in \eqref{ACFunc} is uniformly bounded from above.
The existence of some $v \in BV(\Omega; \{-1,1\})$ and a subsequence $\{v_{n_k}\}$ converging to $v$ in $L^1(\Omega)$ is well established for sequences of functions with uniformly bounded Modica-Mortola energy (see \cite{modica1987gradient}).

To show the convergence in $L^2(\Omega)$, we recall again that by Hypotheses \ref{assume1},  $W(s) \ge (c_w/4) |s|^2$ for $|s| \ge 2$, and hence for every measurable set $E \subset \Omega$,
\begin{align}
\label{EquiCont}
\int_E |v_n|^2 \ dx & =   \int_{\{y \in E: |v_n(y)| < 2\}} |v_{n}|^2 dx +  \int_{\{y \in E: |v_{n}(y)| \ge 2\}} |v_{n}|^2 dx \nonumber \\
& \le  4 |E| + \frac{4}{c_w} \int_{E}  W(v_{n}) \le 4 |E| +C(q)  \eps_n, \nonumber
\end{align}
where in the last step we used \eqref{IFuncBound}.  Therefore $\{|v_{n_k}|^2\}$ is equi-integrable, and convergence of $\{v_{n_k}\}$ to $v$ in $L^2(\Omega)$ is a consequence of Vitali's Convergence Theorem.

To prove \eqref{compConv}$_2$, note that \eqref{IFuncBound} implies  $\eps_n^2 \|\Delta v_n\|_{L^2(\Omega)} \le C(q) \eps_n^{1/2}$.  It follows that  $\eps_n^2 \Delta v_n \rightarrow 0$ in $L^2(\Omega)$.
\end{proof}

\section{Slicing Propositions}

The slicing arguments in the following propositions will be used in the proof of the Liminf Inequality. In what follows  we adopt the notation introduced in Definition \ref{mainDefs}. 
\begin{proposition}
\label{LayerProp}
There exists a constant $C(d)>0$ with the following property:
If $K > 0$, $\m \in \mathbb{N}$, and $v \in W^{3,2}(Q(x_0, r_0))$ are such that
\beq
\label{LayerPropAsm}
\cI_\eps[v; Q(x_0, r_0)] \le K
\eeq
for some $0 < \eps < \eps_1 := \frac{r_0}{4 \m \sqrt{C(d)}}$, then there exists $i \in \{1, \dots, \m\}$ (depending on $v$) such that 
\beq
\cF_\eps[v; Q(x_0, r)] \ge q \, \cI_\eps[v; Q(x_0, r)] -  \frac{q}{q_*} \frac{6K}{\m} \nonumber
\eeq
and
\beq
\cI_\eps[v; L] \le \frac{K}{\m}, \nonumber
\eeq
for all $r \in \left( \frac{r_0}{2} \left(1 + \frac{2i -1}{2\m} \right), \frac{r_0}{2} \left(1 + \frac{i}{\m} \right) \right)$ and all
$
0 < q < q_1
$,
where
\beq
L := Q\left(x_0,\frac{r_0}{2} \left( 1 + \frac{i}{\m} \right)\right) \backslash Q \left(x_0,\frac{r_0}{2} \left( 1 + \frac{i-1}{\m} \right)\right) \nonumber
\eeq 
and
\begin{equation*}
q_1 :=  \frac{q_*}{2q_* +4 K_w + 4C_w^2 + 3 C(d) + 1}.
\end{equation*}
\end{proposition}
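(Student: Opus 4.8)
The plan is to carry over the lower bound of Proposition~\ref{LBThm} to the sub-cubes $Q(x_0,r)$. The one genuinely new difficulty is that a sub-cube carries no Neumann data, so Proposition~\ref{EllipticEstProp} cannot be invoked directly and the negative term $-\tfrac{2q}{q_*}\eps^3\int|\nabla^2 v|^2$ produced by the interpolation inequality must be absorbed by other means.

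First I would fix the good layer. Partition the dyadic shell $Q(x_0,r_0)\setminus Q(x_0,r_0/2)$ into the $\m$ pairwise disjoint layers $L_j:=Q(x_0,\tfrac{r_0}{2}(1+\tfrac{j}{\m}))\setminus Q(x_0,\tfrac{r_0}{2}(1+\tfrac{j-1}{\m}))$, $j=1,\dots,\m$. Since they are disjoint and contained in $Q(x_0,r_0)$, $\sum_{j=1}^{\m}\cI_\eps[v;L_j]\le\cI_\eps[v;Q(x_0,r_0)]\le K$, so by the pigeonhole principle there is an index $i$ (depending only on $v$) with $\cI_\eps[v;L_i]\le K/\m$; put $L:=L_i$, which is the second assertion. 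Writing $\rho_j:=\tfrac{r_0}{2}(1+\tfrac{j}{\m})$ and $h:=\tfrac{r_0}{4\m}$, every $r$ in the stated interval satisfies $\rho_{i-1}+h<r<\rho_i$, so $P:=Q(x_0,\rho_{i-1}+h)\subset Q(x_0,r)$, while both $Q(x_0,r)\setminus P$ and the thin shell $T:=P\setminus Q(x_0,\rho_{i-1})$ are contained in $L_i$. By additivity of the energies over disjoint sets, $\cF_\eps[v;Q(x_0,r)]=\cF_\eps[v;P]+\cF_\eps[v;Q(x_0,r)\setminus P]$, and likewise for $\cI_\eps$. On the outer piece $A:=Q(x_0,r)\setminus P\subset L_i$ the definition of $\cF_\eps[\cdot;\cdot]$ gives $\cF_\eps[v;A]-q\,\cI_\eps[v;A]\ge -2q\,\cI_\eps[v;A]\ge -2qK/\m\ge -\tfrac{2q}{q_*}\tfrac{K}{\m}$, because $W\ge0$, because $q<1/2$ renders the $(\Delta v)^2$ and $|\nabla\Delta v|^2$ contributions nonnegative, and because $-\eps q|\nabla v|^2$, $-q\eps^3|\nabla^2v|^2$, $-qW(v)/\eps$ are each bounded below by $-q$ times a term of $\cI_\eps$. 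It thus remains to show $\cF_\eps[v;P]\ge q\,\cI_\eps[v;P]-\tfrac{4q}{q_*}\tfrac{K}{\m}$.

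The hard part is the cube $P$, where I would replay the proof of Proposition~\ref{LBThm}: Taylor expansion of $W(-\eps^2\Delta v+v)$ about $v$ with $|W''|\le K_w$, the Young estimate \eqref{25} together with $|W'(v)|\le C_w\sqrt{W(v)}$, and the interpolation inequality \eqref{interpol} on the rectangle $P$ (legitimate for $\eps<\eps_1$, since by scaling the threshold of Proposition~\ref{interpolProp} for $P$ equals $(\rho_{i-1}+h)\,\eps_0(Q)\ge\tfrac{r_0}{2}\eps_0(Q)$, and $C(d)$ is taken large enough that $\eps_1<\tfrac{r_0}{2}\eps_0(Q)$). This gives the analogue of \eqref{FLB3} on $P$ with the offending term $-\tfrac{2q}{q_*}\eps^3\int_P|\nabla^2v|^2$. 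To control it I would choose a Lipschitz cut-off $\varphi$ with $\varphi\equiv1$ on $Q(x_0,\rho_{i-1})$, $\operatorname{supp}\varphi\subset P$ and $\|\nabla\varphi\|_\infty\le2/h$ (so that $\operatorname{supp}\nabla\varphi\subset T\subset L_i$), and apply to $w:=\varphi\,\nabla v\in W^{1,2}(\R^d;\R^d)$, extended by zero, the identity
\[
\int_{\R^d}|\nabla w|^2=\int_{\R^d}(\operatorname{div}w)^2+\sum_{j<k}\int_{\R^d}\bigl(\partial_j w_k-\partial_k w_j\bigr)^2 .
\]
Since $w=\nabla v$ on $Q(x_0,\rho_{i-1})$, since $\operatorname{div}w=\varphi\Delta v+\nabla\varphi\cdot\nabla v$ involves only first derivatives of $v$, and since $\partial_j w_k-\partial_k w_j=(\partial_j\varphi)\partial_k v-(\partial_k\varphi)\partial_j v$ is supported in $T$, Young's inequality yields $\int_{Q(x_0,\rho_{i-1})}|\nabla^2v|^2\le2\int_P(\Delta v)^2+3\|\nabla\varphi\|_\infty^2\int_{L_i}|\nabla v|^2$; adding $\int_P|\nabla^2v|^2\le\int_{Q(x_0,\rho_{i-1})}|\nabla^2v|^2+\int_{L_i}|\nabla^2v|^2$ one sees that, after reserving $q\eps^3|\nabla^2v|^2$ to rebuild the Hessian ingredient of $\cI_\eps$, every remaining term is an integral over $L_i$ carrying either $\eps^3$ or $\eps^2\|\nabla\varphi\|_\infty^2$, hence bounded by a constant times $\cI_\eps[v;L_i]\le K/\m$. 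Here the definition $\eps_1=r_0/(4\m\sqrt{C(d)})$ is exactly what makes $\eps^2\|\nabla\varphi\|_\infty^2\le4/C(d)$. Substituting back, taking $\theta:=8q/q_*$, and choosing $C(d)$ large (depending only on $d$: large enough for the two uses just made, and so that the coefficient of $\eps^3(\Delta v)^2$ stays $\ge0$ when $q<q_1$), one collects $\cF_\eps[v;P]\ge q\,\cI_\eps[v;P]-\tfrac{4q}{q_*}\tfrac{K}{\m}$, and adding the outer-piece estimate yields the first assertion.

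The principal obstacle is this last step: replacing the missing Neumann-based elliptic estimate on a cube by a cut-off argument built on the displayed divergence identity, organised so that every error term lives in the good layer $L_i$, and then checking that the resulting numerical constants stay within the prescribed error $\tfrac{6q}{q_*}\tfrac{K}{\m}$ and threshold $q_1$ — the coupling of $\eps_1$ to $r_0$, $\m$ and $C(d)$ being precisely what renders the gradient of the cut-off harmless.
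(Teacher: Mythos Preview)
Your proposal is correct and follows the same overall strategy as the paper: pigeonhole to find a good layer, replace the Neumann-based elliptic estimate of Proposition~\ref{EllipticEstProp} by an interior $H^2$ estimate whose error lives in the good layer, and then rerun the proof of Proposition~\ref{LBThm} up to \eqref{FLB3}.

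There are two tactical differences worth noting. First, the paper simply \emph{quotes} the interior Caccioppoli-type estimate
\[
\int_{Q(r_1)}|\nabla^2 v|^2\le C(d)\Bigl(\int_{Q(r_2)}|\Delta v|^2+\frac{1}{(r_2-r_1)^2}\int_{Q(r_2)\setminus Q(r_1)}|\nabla v|^2\Bigr)
\]
from Jost, with $r_1$ the inner radius of the good layer and $r_2=r$ itself; this gives directly an estimate on the full cube $Q(x_0,r)$, so no splitting into $P$ and $A$ is needed. Your cut-off argument via the divergence--curl identity is precisely how such interior estimates are proved, so you are in effect re-deriving the cited inequality by hand; this makes your version self-contained but a little longer. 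Second, because you fix $P$ independently of $r$ and handle the shell $A$ by the crude bound $\cF_\eps-q\cI_\eps\ge-2q\cI_\eps$, you need the extra bookkeeping step of checking that the outer piece costs at most $\tfrac{2q}{q_*}\tfrac{K}{\m}$, whereas the paper never separates the cube. Both routes arrive at the same error $\tfrac{6q}{q_*}\tfrac{K}{\m}$ and the same structure for $q_1$; the paper's $C(d)$ is the Jost constant, while in your version $C(d)$ must be declared large enough to absorb the cut-off constants and to force $\eps_1$ below the interpolation threshold --- the statement allows this since $C(d)$ is only required to exist.
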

\begin{proof}
For simplicity we will use the notation $Q(r) := Q(x_0, r)$.  
The following estimate is obtained from the proof of Lemma 9.2.3 in \cite{Jost}.  Let $0 < r_1 < r_2 < r_0$.  Then,
\beq
\label{elIneq}
\int_{\Q(r_1)} |\nabla^2 v|^2  \le C(d) \left( \int_{\Q(r_2)} |\Delta v|^2 \ dx + \frac{1}{(r_2 - r_1)^2} \int_{Q(r_2) \backslash Q(r_1)} |\nabla v|^2 \ dx \right).
\eeq
Given $\m \in\mathbb{N}$, we first partition the set $Q(r_0) \backslash Q(r_0/2)$
into $\m$ layers
\beq
L^i := Q\left(\frac{r_0}{2}\left(1+\frac{i}{\m}\right)\right)\setminus Q\left(\frac{r_0}{2}\left(1+\frac{i-1}{\m}\right)\right) , \quad i = 1, \dots, \m. \nonumber
\eeq
Since
\beq
\sum_{i=1}^\m \cI_\eps[v; L^i] \le \cI_\eps[v; Q(r_0)], \nonumber
\eeq
by \eqref{LayerPropAsm} there exists a layer $L^{i^*}$ satisfying 
\beq
\label{transLayerEst}
\cI_\eps[v; L^{i^*}] \le  \frac{1}{\m} \cI_\eps[v; Q(r_0)] \le \frac{K}{\m}.
\eeq
Fix $r \in \left(\frac{r_0}{2}\left(1+\frac{2i^*-1}{2\m}\right), \frac{r_0}{2}\left(1+\frac{i^*}{\m}\right)\right)$.
Choosing $r_1:=\frac{r_0}{2}\left(1+\frac{i^*-1}{\m}\right)$, $r_2:=r$ and applying estimate \eqref{elIneq} we obtain
\beq
\int_{Q(r_1)} |\nabla^2 v|^2  \ dx \le C(d) \ \left(  \int_{Q(r)} |\Delta v|^2 \ dx  + \frac{16\m^2}{r_0^2} \int_{L^{i^*}} |\nabla v|^2\ dx  \right). \nonumber
\eeq
Adding  $\int_{L^{i^*}} |\nabla^2 v|^2 \ dx$ to both sides and multiplying by $\eps^3$ yields, by \eqref{transLayerEst},
\begin{align}
\label{transLayerEstM}
\eps^3 \int_{Q(r)} |\nabla^2 v|^2 \ dx & \le C(d) \ \left(  \eps^3 \int_{Q(r)} |\Delta v|^2 \ dx   + \frac{16\m^2}{r_0^2} \eps^3 \int_{L^{i^*}} |\nabla v|^2 \ dx \right) + \eps^3 \int_{L^{i^*}} |\nabla^2 v|^2 \ dx \nonumber \\
& \le C(d) \left( \eps^3 \int_{Q(r)} |\Delta v|^2 \ dx  + \frac{16\m^2}{r_0^2} \eps^2 \frac{K}{\m} \right) + \frac{K}{\m}. \nonumber
\end{align}
Let $0 < \eps_1^2 = \frac{r_0^2}{16 \m^2 C(d)}$.  Then for $0 < \eps < \eps_1$ we have
\beq
\label{regularityRes}
\eps^3 \int_{Q(r)}  |\nabla^2 v|^2 \ dx  \le C(d) \ \eps^3 \int_{\Q(r)} |\Delta v|^2  \ dx  + \frac{2K}{\m}.
\eeq
Repeating the argument of the proof of Proposition \ref{LBThm} with $\theta := \frac{8q}{q_*}$ until \eqref{FLB3} and using  \eqref{regularityRes} multiplied by 3 in place of Proposition \ref{EllipticEstProp} yields
\begin{align}
\cF_{\eps}[v; Q(r)] & \ge  \int_{Q(r)} \Bigg(  \frac{2q}{q_*} \frac{1}{\eps} W(v) + \left(1 - 2q - \frac{4q}{q_*} K_w - \frac{4q}{q_*} C_w^2 - \frac{3 q}{q_*} C(d)  \right) \eps^3 |\Delta v|^2  \nonumber \\
& +  q \eps |\nabla v|^2 + \frac{q}{q_*} \eps^3 |\nabla^2 v|^2 + (1-q) \eps^5 |\nabla \Delta v|^2 \Bigg) dx - \frac{q}{q_*} \frac{6K}{\m}  \nonumber \\
& \ge  q \, \cI_{\eps}[v; Q(r)]  -  \frac{q}{q_*} \frac{6K}{\m}, \nonumber
 \end{align}
provided $
0 < q <  q_1 $. 
This completes the proof.
\end{proof}
\begin{proposition}
\label{glProp}
Let $k \in \mathbb{N}$, $\eps_n \rightarrow 0^+$, $\nu \in \mathbb{S}^{d-1}$, and $\{w_n\} \subset W^{3,2}(Q_\nu(0,1))$ be such that 
\beq
\lim_{n \rightarrow \infty} \int_{Q_\nu(0,1)} |w_n - v_0|^2 dx = 0, \nonumber
\eeq
and
\beq
\label{IBoundProp1}
\cI_{\eps_n}[w_n; \tilde{L}_k] \le \frac{C_0}{k}
\eeq
for all $n$ and some $C_0 > 0$, not dependent on $k$,
where
\beq
\label{u0Def}
v_0(y) := \left\{\begin{array}{ll}
1  &\mbox{ if  }  y \cdot \nu < 0, \\
-1 & \mbox{ if } y \cdot \nu > 0,
\end{array}\right. \nonumber
\eeq
and
\beq
\tilde{L}_k := Q_\nu(0,1) \backslash Q_\nu(0, 1- 1/(4\m)). \nonumber
\eeq
Then
\beq
\label{limInfinProp}
\cF_{\eps_n}[w_n; Q_\nu(0,1)] \ge m_d - \frac{C}{\m}, \nonumber
\eeq
where the constant $C$ does not depend on $k$.
\begin{proof}

We modify $\{w_n\}$ to belong to the admissible class $\cA_\nu$ without increasing the energy.
Given $\Psi \in C_c^\infty(\mathbb{R}^d)$, with $\mbox{supp}(\Psi) \subset B_1(0)$ and $\int_{\mathbb{R}^d} \Psi(y) dy = 1$, and $\eps>0$, consider the mollifier
\beq
\label{mol}
\Psi_\eps(y) := \frac{1}{\eps^d} \Psi\left(\frac{y}{\eps}\right)
\eeq
and
\beq
\varphi_n := v_0 \ast \Psi_{\eps_n}. \nonumber
\eeq
Note that $\varphi_n \in C^\infty(\R^d)$ and
\beq
\label{phiBounds}
||\varphi_n||_{L^\infty(\mathbb{R^d})} \le 1, \quad ||\nabla \varphi_n||_{L^\infty(\mathbb{R^d})} \le C \eps_n^{-1}, \quad ||\nabla^2 \varphi_n||_{L^\infty(\mathbb{R^d})} \le C \eps_n^{-2},
\quad ||\nabla^3 \varphi_n||_{L^\infty(\mathbb{R^d})} \le C \eps_n^{-3}.
\eeq
In addition,
\beq
\varphi_n(y) = \left\{\begin{array}{ll}
1  &\mbox{ if  }  y \cdot \nu < -\eps_n, \\
-1 & \mbox{ if } y \cdot \nu > \eps_n,
\end{array}\right. \nonumber
\eeq
and
\beq
\nabla^s \varphi_n(y) = 0 \mbox{ if } |y \cdot \nu| > \eps_n, \quad s = 1, 2, 3. \nonumber
\eeq
Hence for $\eps_n$ sufficiently small $\varphi_n \in \cA_\nu$.
We want to define a function $z_n$ to equal  $\varphi_n$ near the boundary of $\Q_\nu$ and $w_n$ away from the boundary. To be precise,  we first  partition the set
$\tilde{L}_k=
Q_\nu(0, 1) \backslash Q_\nu(0, 1-1/(4\m))
$
into $\lceil \eps_n^{-1} \rceil$ layers, 
\beq
L_n^i := Q_\nu\left(0, 1- \frac{i-1}{4\m  \lceil \eps_n^{-1} \rceil} \right) \Big\backslash Q_\nu\left(0, 1  - \frac{i}{4\m \lceil \eps_n^{-1} \rceil}\right), \quad i = 1, \dots, \lceil \eps_n^{-1} \rceil, \nonumber
\eeq
where $\lceil x \rceil$ is defined as the smallest integer not less than $x$.
Since both $w_n \rightarrow v_0$ in $L^2(\Q_\nu)$ and $\varphi_n \rightarrow v_0$ in $L^2(\Q_\nu)$, we have
\beq
 ||w_n - \varphi_n||_{L^2(\Q_\nu)}^2  \rightarrow 0 \mbox{ as } n \rightarrow \infty. \nonumber
\eeq
Note that $\cup_i L_n^i = \tilde{L}_k \subset Q_\nu(0,1)$ and that  $L_n^i$ are pairwise disjoint, so the sum over all of the layers is bounded by
\beq
 \sum_i \cI_{\eps_n}[w_n; L_{n}^{i}] +  \frac{\sum_i ||w_n - \varphi_n||_{L^2(L_{n}^{i})}^2}{ ||w_n - \varphi_n||_{L^2(\Q_\nu)}^2} \le \frac{C_0}{k} + 1.  \nonumber
\eeq
Since there are $\lceil \eps_n^{-1} \rceil$ layers,  for one of these layers, say $L_n := L_{n}^{i^*}$, it holds
\beq
\label{transLayerEst2}
 \cI_{\eps_n}[w_n; L_{n}] +  \frac{||w_n - \varphi_n||_{L^2(L_{n})}^2}{ ||w_n - \varphi_n||_{L^2(\Q_\nu)}^2} \le \left(\frac{C_0}{k} + 1\right) \, \eps_n.
\eeq
Define
\beq
z_n := \eta_{n} {w}_n + (1-\eta_{n}) \varphi_n, \nonumber
\eeq
where $\eta_{n}$ is a smooth function with support in $Q_\nu(0,1)$ such that
\beq
\eta_{n}(x) := \left\{\begin{array}{ll}
0 & \mbox { if } x \in Q_n^{out} := Q_\nu\left(0, 1\right) \backslash Q_\nu\left(0, 1  - \frac{i^*-1}{4\m \lceil \eps_n^{-1} \rceil}\right), \\
\in (0,1) & \mbox{ if } x \in L_{n}, \\
1 & \mbox{ if } x \in Q_n^{in} := Q_\nu \backslash (Q_n^{out} \cup L_n),
\end{array}\right. \nonumber
\eeq
and
\beq
\label{etaBounds}
||\nabla^s \eta_{n}||_{L^\infty(\Q_\nu)} = \mathcal{O}\left(\frac{\m^s}{\eps_n^s}\right), \quad s=1,2,3.
\eeq
Moreover,
\beq
\cF_{\eps_n}[z_n; Q_\nu] = \cF_{\eps_n}[\varphi_n; Q_n^{out}] +  \cF_{\eps_n} [z_n; L_n] + \cF_{\eps_n} [w_n; Q_n^{in}]. \nonumber
\eeq
We observe that since $\cF_{\eps_n}[w_n; Q_\nu \backslash Q_n^{in}]$ can be negative it is not necessarily true that
$
\cF_{\eps_n}[w_n; Q_n^{in}] \le \cF_{\eps_n}[w_n; Q_\nu]. 
$
Instead, we use \eqref{IBoundProp1} to control the negative terms to obtain
\begin{align}
\label{EnergyDecomp}
\cF_{\eps_n}[z_n; Q_\nu] & \le \cF_{\eps_n}[\varphi_n; Q_n^{out}] +  \cF_{\eps_n} [z_n; L_n] + \cF_{\eps_n} [w_n; Q_\nu] + q \int_L \eps_n |\nabla w_n|^2 dx \nonumber \\
& \le \cF_{\eps_n}[\varphi_n; Q_n^{out}] + \cF_{\eps_n} [z_n; L_n] + \cF_{\eps_n} [w_n; Q_\nu] + q\frac{C_0}{\m}.
\end{align}
Note that for $s = 1,2,3$,
\beq
\label{varphi1}
\eps_n^{2s-1} \int_{Q_n^{out}} |\nabla^s \varphi_n|^2 dx \le \eps_n^{2s-1} \frac{C}{\eps_n^{2s}} |\{x \in Q_n^{out}: \varphi_n \ne \pm 1\}| \le \frac{C}{\m}.
\eeq
In addition, by the continuity of $W$,
\beq
\label{varphi2}
\frac{1}{\eps_n} \int_{Q_n^{out}} W(-\eps_n^2 \Delta \varphi_n + \varphi_n) dx \le \frac{C}{\eps_n}  |\{x \in Q_n^{out}: \varphi_n \ne \pm 1\}| \le \frac{C}{\m}.
\eeq
Together \eqref{varphi1} and \eqref{varphi2} imply
\beq
\label{OutEst}
\cF_{\eps_n}[\varphi_n; Q_n^{out}] \le \frac{C}{\m}.
\eeq
To estimate $\cF_{\eps_n}[z_n; L_n]$, we first note that
\begin{align}
\partial_{x_i} z_n = \partial_{x_i} \eta_{n} (w_n - \varphi_n)+ \eta_{n} \partial_{x_i} w_n + (1-\eta_{n}) \partial_{x_i} \varphi_n, \nonumber
\end{align}
and
\begin{align}
\partial_{x_i x_k} z_n & =  \partial_{x_i x_k} \eta_{n} (w_n - \varphi_n) + \partial_{x_i} \eta_{n} \partial_{x_k} w_n + \partial_{x_k} \eta_{n}  \partial_{x_i} w_n + \eta_{n} \partial_{x_i x_k} w_n \nonumber \\
& -  \partial_{x_i} \eta_{n} \partial_{x_k} \varphi_n - \partial_{x_k} \eta_{n} \partial_{x_i} \varphi_n + (1 - \eta_{n}) \partial_{x_i x_k} \varphi_n. \nonumber
\end{align}
We use \eqref{transLayerEst2} to control the derivatives of $w_n$ in the transition region $L_{n}$.  From \eqref{phiBounds}, \eqref{transLayerEst2}, \eqref{etaBounds}, the expressions for the derivatives of $z_n$ and the fact that $||w_n - \varphi_n||_{L^2(Q)} \rightarrow 0$, we readily obtain the following bounds on the terms in $\cF_{\eps_n}[z_n; L_{n}] $,
\begin{align}
\label{nablaWTilde}
\eps_n \int_{L_n} |\nabla z_n|^2 dx & \le C \int_{L_n}  \left( \eps_n |\nabla \eta_n|^2 |w_n - \varphi_n|^2 + \eps_n \eta_n^2 |\nabla w_n|^2 + \eps_n (1-\eta_n)^2  |\nabla \varphi_n|^2 \right) dx \nonumber \\
& \le C \left( \frac{\m^2 \eps_n}{\eps_n^2} ||w_n - \varphi_n||_{L^2(L_n)}^2 + \left(\frac{C_0}{k} + 1\right) \eps_n + \frac{\eps_n}{\eps_n^2} |\{x \in L_n: \varphi_n \ne \pm 1\}| \right) \nonumber \\
& \le C \left( \m^2 \left(\frac{C_0}{k} + 1\right)   ||w_n - \varphi_n||_{L^2(L)}^2 +  \left(\frac{C_0}{k} + 1\right) \eps_n + \frac{\eps_n}{\m} \right) \le  \frac{C}{\m} 
\end{align}
for $n$ sufficiently large, where we used $\left| \{x \in L_{n}: |x \cdot \nu| < \eps_n\} \right|  = \mathcal{O}(\eps_n^2/\m)$.
Similarly,
\begin{align}
 \eps_n^3 \int_{L_n} |\nabla^2 z_n| dx & \le C \eps_n^3  \int_{L_n} \Big( |\nabla^2 \eta_n|^2 |w_n - \varphi_n|^2 + 2 |\nabla \eta_n|^2 |\nabla w_n|^2 + 2 |\nabla \eta_n|^2 |\nabla \varphi_n|^2 + \eta_n^2 |\nabla^2 w_n|^2 \nonumber \\
& + (1-\eta_n)^2 |\nabla^2 \varphi_n|^2 \Big) dx \le C \Bigg( \frac{\eps_n^3 \m^4}{\eps_n^4} \left(\frac{C_0}{k} + 1\right)  \eps_n  ||w_n - \varphi_n||_{L^2(L)}^2 +  \frac{\eps_n^2 \m^2}{\eps_n^2}  \left(\frac{C_0}{k} + 1\right)  \eps_n \nonumber \\
& +  \eps_n^3 \left(\frac{\m^2}{\eps_n^2} \frac{1}{\eps_n^2} + \frac{1}{\eps_n^4} \right) |\{x\in L_n : \varphi_n \ne \pm 1\}| + \left(\frac{C_0}{k} + 1\right)  \eps_n  \Bigg) \le \frac{C}{\m} \nonumber
\end{align}
for $n$ sufficiently large.
To bound the integral involving the potential $W$ we first remark that by Hypotheses \ref{assume1} (and Remark \ref{remark1}) $W$ grows quadratically at infinity.  Splitting the integral into  regions where  $|-\eps_n^2 \Delta z_n + z_n| \le 2$ and  $|-\eps_n^2 \Delta z_n + z_n| > 2$, we use the quadratic growth of  $W$  to obtain,
\begin{align}
\label{WBound}
& \left| \frac{1}{\eps_n} \int_{L_{n}} \W(-\eps_n^2 \Delta z_n + z_n)  dx \right|  \le   \frac{\sup_{|s| \le 2} W(s)}{\eps_n} | L_{n}| + \frac{C_w^2}{4 \eps_n}  \int_{L_{n}} (-\eps_n^2 \Delta z_n + z_n)^2  dx \nonumber \\
& \le  \frac{C}{\m} + \frac{C_w^2}{2}  \int_{L_{n}}  \eps_n^3 |\Delta z_n|^2 dx +  \frac{C_w^2}{2 \eps_n}   \int_{L_{n}}   z_n^2 dx 
 \le  \frac{C}{\m} + \frac{C_w^2}{2}  \int_{L_{n}}  \eps_n^3 |\Delta z_n|^2 dx + \frac{C_w^2}{\eps_n}   \int_{L_{n}}  (w_{n}^2 + \varphi_n^2) dx \nonumber \\
&  \le  \frac{C}{\m} + \frac{C_w^2}{2}  \int_{L_{n}}  \eps_n^3 |\Delta z_n|^2 dx +  \frac{C}{\eps_n}   \int_{L_{n}}   \W(w_{n}) dx   + \frac{C}{\eps_n }|L_n|  \le\left(\frac{C_0}{k} + 1\right)  \eps_n + \frac{C}{\m} \le \frac{C}{\m} 
\end{align}
for $n$ sufficiently large, where we again used \eqref{transLayerEst2}.
Analogous calculations are used to estimate \\ $\eps_n^5 \int_{L_n} |\nabla \Delta z_n|^2 dx$.
Combining estimates \eqref{OutEst}, \eqref{nablaWTilde}-\eqref{WBound} with \eqref{EnergyDecomp} completes the proof.
\end{proof}
\end{proposition}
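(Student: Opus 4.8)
The hypotheses say that in the bulk $w_n$ already resembles an admissible competitor for the cell problem defining $m_d$: it converges to the planar transition $v_0$ and carries vanishingly little $\cI_{\eps_n}$--energy in the boundary shell $\tilde{L}_\m$. The plan is therefore to surgically replace $w_n$ near $\partial Q_\nu$ by a genuinely admissible function $z_n\in\cA_\nu$, losing only $O(1/\m)$ in energy, and then quote the definition \eqref{eq:md} of $m_d$. First I would introduce the reference function $\varphi_n:=v_0\ast\Psi_{\eps_n}$, the mollification of the jump at scale $\eps_n$; since $v_0$ depends only on $x\cdot\nu$, $\varphi_n$ is automatically periodic in the tangential directions, equals $\pm1$ off the slab $\{|x\cdot\nu|\le\eps_n\}$, and obeys $\|\nabla^s\varphi_n\|_\infty\le C\eps_n^{-s}$, so $\varphi_n\in\cA_\nu$ for $n$ large, and moreover $\|w_n-\varphi_n\|_{L^2(Q_\nu)}\to 0$ because both $w_n$ and $\varphi_n$ tend to $v_0$.

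Next I would glue $w_n$ to $\varphi_n$ across a thin layer. Subdividing $\tilde{L}_\m$ into $\lceil\eps_n^{-1}\rceil$ sublayers and using $\cI_{\eps_n}[w_n;\tilde{L}_\m]\le C_0/\m$ together with the convergence just noted, a pigeonhole argument produces one sublayer $L_n$ on which simultaneously $\cI_{\eps_n}[w_n;L_n]\le(C_0/\m+1)\eps_n$ and $\|w_n-\varphi_n\|_{L^2(L_n)}^2\le(C_0/\m+1)\eps_n\|w_n-\varphi_n\|_{L^2(Q_\nu)}^2$. Setting $z_n:=\eta_n w_n+(1-\eta_n)\varphi_n$ with $\eta_n$ a cutoff equal to $1$ on the inner cube $Q_n^{in}$, to $0$ on the outer shell $Q_n^{out}$, and transitioning on $L_n$ (so $\|\nabla^s\eta_n\|_\infty=O((\m/\eps_n)^s)$, since $L_n$ has width $\sim\eps_n/\m$), I would decompose
\beq
\cF_{\eps_n}[z_n;Q_\nu]=\cF_{\eps_n}[\varphi_n;Q_n^{out}]+\cF_{\eps_n}[z_n;L_n]+\cF_{\eps_n}[w_n;Q_n^{in}].\nonumber
\eeq
The outer term is $O(1/\m)$ because $\varphi_n\equiv\pm1$ off a subset of $Q_n^{out}$ of measure $O(\eps_n/\m)$. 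For the inner term, the one genuine subtlety of the statement shows up: because $\cF$ carries the \emph{negative} term $-\eps q|\nabla v|^2$, one cannot bound $\cF_{\eps_n}[w_n;Q_n^{in}]$ by $\cF_{\eps_n}[w_n;Q_\nu]$ by monotonicity in the domain. Instead, since $Q_\nu\setminus Q_n^{in}=Q_n^{out}\cup L_n\subset\tilde{L}_\m$ and the remaining three terms of $\cF$ are nonnegative for $q<1/2$, one gets $\cF_{\eps_n}[w_n;Q_\nu\setminus Q_n^{in}]\ge-q\,\cI_{\eps_n}[w_n;\tilde{L}_\m]\ge-qC_0/\m$, whence $\cF_{\eps_n}[w_n;Q_n^{in}]\le\cF_{\eps_n}[w_n;Q_\nu]+qC_0/\m$.

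The main obstacle is the transition term $\cF_{\eps_n}[z_n;L_n]$. Expanding $\nabla z_n$, $\nabla^2 z_n$, $\nabla\Delta z_n$ by the Leibniz rule yields terms of schematic type $\nabla^a\eta_n\,\nabla^b w_n$ and $\nabla^a\eta_n\,\nabla^b\varphi_n$ with $a+b\le 3$; the dangerous ones carry $\|\nabla^a\eta_n\|_\infty\sim(\m/\eps_n)^a$ but are tamed by pairing them against $\|w_n-\varphi_n\|_{L^2(L_n)}^2$ (pure $\eta_n$--derivative terms), against $\cI_{\eps_n}[w_n;L_n]=O(\eps_n/\m)$ (terms hitting derivatives of $w_n$), and against $|\{\varphi_n\ne\pm1\}\cap L_n|=O(\eps_n^2/\m)$ (terms hitting derivatives of $\varphi_n$). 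The potential contribution is controlled by splitting according to $|-\eps_n^2\Delta z_n+z_n|\le 2$ or $>2$ and invoking the quadratic growth of $W$ at infinity (Remark~\ref{remark1}), which reduces it to $\eps_n^3\int_{L_n}|\Delta z_n|^2$ plus terms of size $\frac1{\eps_n}\int_{L_n}(W(w_n)+\varphi_n^2+|L_n|)$, all already seen to be $O(1/\m)$; $\eps_n^5\int_{L_n}|\nabla\Delta z_n|^2$ is disposed of the same way. Hence $\cF_{\eps_n}[z_n;L_n]\le C/\m$ for $n$ large, with $C$ independent of $\m$. Finally, for $n$ large $z_n\in\cA_\nu$ and $\eps_n\le 1$, so \eqref{eq:md} gives $m_d\le\cF_{\eps_n}[z_n;Q_\nu]$; adding the three estimates above yields $m_d\le\cF_{\eps_n}[w_n;Q_\nu]+C/\m$, which is the claim. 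I expect essentially all the effort to lie in this layer surgery — concretely, in selecting $L_n$ so that $w_n$, its first and second derivatives, and $w_n-\varphi_n$ are all small there at once, and in using \eqref{IBoundProp1} (rather than a naive domain restriction) to absorb the sign-indefinite gradient term.
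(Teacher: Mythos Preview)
Your proposal is correct and follows essentially the same route as the paper: mollify $v_0$ to obtain $\varphi_n\in\cA_\nu$, pigeonhole in $\tilde L_\m$ over $\lceil\eps_n^{-1}\rceil$ sublayers to find $L_n$ where both $\cI_{\eps_n}[w_n;\cdot]$ and $\|w_n-\varphi_n\|_{L^2}^2$ are small, glue with a cutoff $\eta_n$, decompose $\cF_{\eps_n}[z_n;Q_\nu]$ into outer/transition/inner pieces, and absorb the sign-indefinite gradient term via \eqref{IBoundProp1} exactly as you describe. One small imprecision: the pigeonhole yields $\cI_{\eps_n}[w_n;L_n]\le(C_0/\m+1)\eps_n$, not $O(\eps_n/\m)$, so some cross terms in the Leibniz expansion are only $O(\m^2\eps_n)$ rather than $O(\m\eps_n)$; this is harmless since they still vanish as $n\to\infty$ for fixed $\m$, leaving the $C/\m$ bound intact.
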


\section{Proof of the Liminf Inequality}
\label{LiminfSection}

In this section we prove the Liminf Inequality of Theorem \ref{GamamConvThm}.  We use the blow-up method to reduce the problem to a unit cube, where we follow the general lines of  \cite{chermisi2010singular}.
In what follows we assume $q \le \min\{q_0, q_1\}$ (see Propositions \ref{LBThm} and \ref{LayerProp}).
Fix $\eps_n \rightarrow 0^+$ and $\{v_n\} \subset W^{3,2}(\Omega)$, $v_n \rightarrow v \in L^2(\Omega)$.
We may assume that
\beq
\label{F4B}
\liminf_{n \rightarrow \infty} \cF_{\eps_n}[v_n] < \infty,
\eeq
and we extract a subsequence $\{v_{n_k}\}$ of $\{v_n\}$ satisfying
\beq
\lim_{k \rightarrow \infty}  \cF_{\eps_{n_k}}[v_{n_k}]  = \liminf_{n \rightarrow \infty} \cF_{\eps_n}[v_n] < \infty. \nonumber
\eeq
By selecting a further subsequence, if necessary, we can assume that $\sup_k \cF_{\eps_{n_k}} [v_{n_k}] < \infty$ so that by Proposition \ref{LBThm},
\beq
\label{B}
\sup_k \cI_{\eps_{n_k}} [v_{n_k}; \Omega] =: K < \infty.
\eeq
Since $v_{n_k} \rightarrow v$ in $L^2(\Omega)$, Theorem \ref{CompactnessRn} implies that $v \in BV(\Omega; \{-1,1\})$.
Therefore,
\beq
\label{LimitV}
v = \chi_{E} - \chi_{\Omega \backslash E},
\eeq
where $\mbox{Per}_\Omega(E) < \infty$. In what follows, to simplify  notation we denote the subsequence of $\{ v_n\}$ extracted in \eqref{B} by $\{v_n\}$.

We first note that, due to \eqref{F4B} and \eqref{B}, the sequences of functions
\beq
f_n :=  \frac{1}{\eps_n} W(-\eps_n^2 \Delta v_n + v_n) - \eps_n q |\nabla v_n|^2 + (1-2q) \eps_n^3 |\Delta v_n|^2 + (1-q) \eps_n^5 |\nabla \Delta v_n|^2 \nonumber
\eeq
and
\beq
g_n :=  \frac{1}{\eps_n} W( v_n) + \eps_n  |\nabla v_n|^2 + \eps_n^3 |\Delta v_n|^2 +  \eps_n^5 |\nabla \Delta v_n|^2 \nonumber
\eeq
are bounded in $L^1(\Omega)$.  Consider the  signed Radon measures defined on Borel subsets of $\Omega$,
\beq
\lambda_n(B) := \int_B f_n \ dx,
\quad
\zeta_n(B) := \int_B g_n \ dx. \nonumber
\eeq
Up to subsequences, not relabeled, we may assume that there exist Radon measures $\lambda$, $\mu$, $\zeta$ such that
\beq
\lambda_n \mbox{ }   \rightharpoonup^* \lambda, \quad |\lambda_n|  \rightharpoonup^* \mu, \quad \zeta_n \rightharpoonup^* \zeta   \nonumber
\eeq
in the space $\mathcal{M}_b(\Omega)$ of all bounded signed Radon measures on $\Omega$  (see Proposition 1.202 in \cite{fonseca2007modern}), where $|\lambda_n|$ denotes the total variation of $\lambda_n$. We claim that $\lambda \ge 0$.

Suppose that $\lambda\neq 0$. By the Besicovitch Derivation Theorem (Theorem 1.155 in \cite{fonseca2007modern}), for $|\lambda|$-a.e. $x_0 \in \Omega$ 
\beq
\label{deriv}
\frac{d \lambda}{d|\lambda|}(x_0) =  \lim_{r \rightarrow 0^+} \frac{\lambda(\Q(x_0, r))}{|\lambda|(\Q(x_0, r))}  \in \R,
\eeq
where $|\lambda|$ is the total variation of $\lambda$. 
Fix any $x_0$ for which \eqref{deriv} holds and $|\lambda|(Q(x_0,r))>0$ for all $r>0$ sufficiently small. Let $\eta \in (0,1)$ and find $\bar{r}_\eta > 0$ such that
\beq
\label{a}
\frac{d \lambda}{d|\lambda|}(x_0) \ge \frac{\lambda(\Q(x_0, r))}{|\lambda|(\Q(x_0, r))}  - \eta
\eeq
for all $0 < r < \bar{r}_\eta$.  

Fix $0 < r_0 < \bar{r}_\eta$ and $\m \in \mathbb{N}$.  By Proposition \ref{LayerProp} for every $n$ there exists $i_n \in \{1, \dots, \m\}$ such that
\beq
\label{b}
\cF_{\eps_n}[v; Q(x_0, r)] \ge q \cI_{\eps_n}[v; Q(x_0, r)] - \frac{q}{q^*} \frac{6K}{\m}
\eeq
for all $r \in \left( \frac{r_0}{2} \left(1 + \frac{2i_n -1}{2\m} \right), \frac{r_0}{2} \left(1 + \frac{i_n}{\m} \right) \right)$ where $K$ is given in \eqref{B}.
Since $i_n \in \{1, \dots, \m\}$ for all $n$, there exists $i^{(1)} \in \{1, \dots, \m\}$ such that
$i^{(1)} = i_n$  for infinitely many $n$, say $n_l$, $l \in \mathbb{N}$.
Let $\m$ be so large that
\beq
\label{Qe}
\frac{q}{q_*} \frac{6K}{\m} \le |\lambda| ( Q(x_0, r_0/2 )) \eta
\eeq
and take
\beq
r_1 \in \left( \frac{r_0}{2} \left(1 + \frac{2i^{(1)} -1}{2\m} \right), \frac{r_0}{2} \left(1 + \frac{i^{(1)}}{\m} \right) \right) \nonumber
\eeq
such that $\mu(\partial Q(x_0, r_1)) = 0$.  Then by \eqref{a}, Corollary 1.204 in \cite{fonseca2007modern}, \eqref{b}, and \eqref{Qe}
\begin{align}
\frac{d \lambda}{d|\lambda|}(x_0) & \ge \frac{\lambda(\Q(x_0, r_1))}{|\lambda|(\Q(x_0, r_1))}  - \eta = \lim_{n \rightarrow \infty} \frac{\cF_{\eps_{n_l}}[v_{n_l}; Q(x_0, r_1)]}{|\lambda| (Q(x_0, r_0))} - \eta \nonumber \\
& \ge \liminf_{n \rightarrow \infty} \frac{ q \cI_{\eps_{n_l}}[v_{n_l}; Q(x_0, r_1)] - |\lambda|(Q(x_0, r_0/2) \eta}{|\lambda|(\Q(x_0, r_1))} - \eta \nonumber \\
& \ge - 2 \eta, \nonumber
\end{align}
where we used the fact that $r_0/2 < r_1$ so that $|\lambda|(Q(x_0, r_1)) \ge |\lambda| (Q(x_0, r_0/2))$.  Letting $\eta \rightarrow 0^+$ we conclude that $\frac{d \lambda}{d |\lambda|} (x_0) \ge 0$.

This shows that $\lambda \ge 0$.  In turn, by the Radon-Nikodym and Lebesgue Decomposition theorems (\cite{fonseca2007modern} Theorem 1.180) we can decompose
\beq
\lambda = \lambda_{ac} + \lambda_s, \nonumber
\eeq
where $\lambda_{ac} \ll \xi$, $\lambda_s \ge 0$, $\lambda_s \perp \xi$, with
\beq
\xi(B) := \cH^{d-1}(B \cap \partial^* E), \quad B \subset \Omega \mbox{ Borel}. \nonumber
\eeq
We claim that for $\cH^{d-1}$-a.e. $x_0 \in \Omega \cap \partial^* E$,
\beq
\label{C}
\frac{d\lambda_{ac}}{d \xi} (x_0) \ge m_d,
\eeq
where $m_d$ is the constant defined in \eqref{eq:md}.
Observe that if \eqref{C} holds, then, since $\lambda_s \ge 0$,
\begin{align}
\lim_{n \rightarrow \infty} \cF_{\eps_n} [v_n; \Omega] & = \lim_{n \rightarrow \infty} \lambda_n(\Omega) \ge \lambda(\Omega) \ge \lambda_{ac}(\Omega) = \int_\Omega \frac{d \lambda_{ac}}{d \xi} d \xi  \nonumber \\
& \ge m_d \cH^{d-1}(\Omega \cap \partial^* E) = m_d \mbox{Per}_\Omega (E), \nonumber
\end{align}
which gives \eqref{LimInf} (see \eqref{LimitFunctional} and \eqref{LimitV}).  In the remainder of the proof we show \eqref{C}.

To this end we first note that by the Besicovitch Derivation Theorem (Theorem 1.155 in \cite{fonseca2007modern}), for $\cH^{d-1}$-a.e. $x_0 \in \Omega \cap \partial^* E$
\beq
\label{dLambdadH1}
\infty > \frac{d \lambda_{ac}}{d \cH^{d-1}}(x_0)  = \lim_{r \rightarrow 0^+} \frac{\lambda(\Q_\nu(x_0, r))}{\cH^{d-1}(\Q_\nu(x_0,r) \cap \partial^* E)} =   \lim_{r \rightarrow 0^+} \frac{\lambda(\Q_\nu(x_0, r))}{r^{d-1}},
\eeq
\beq
\label{dZetadH1}
\infty > \frac{d \zeta_{ac}}{d \cH^{d-1}}(x_0)  = \lim_{r \rightarrow 0^+} \frac{\zeta(\Q_\nu(x_0, r))}{\cH^{d-1}(\Q_\nu(x_0,r) \cap \partial^* E)} =   \lim_{r \rightarrow 0^+} \frac{\zeta(\Q_\nu(x_0, r))}{r^{d-1}},
\eeq
where $\nu$ denotes the outward normal vector to $E$ at $x_0$. 
Fix $x_0 \in \Omega \cap \partial^* E$ for which  \eqref{dLambdadH1} and \eqref{dZetadH1} hold.  Then there exists $\bar{r} > 0$ such that
\beq
\frac{\zeta(\Q_\nu(x_0, r))}{r^{d-1}} \le \frac{d \zeta_{ac}}{d \cH^{d-1}} (x_0) + 1 =: M \nonumber
\eeq
for all $0 < r \le \bar{r}$.  Let $0 < r_0 \le \bar{r}$ be such that $\zeta(\partial Q_\nu(x_0, r_0)) = \mu(\partial Q_\nu(x_0, r_0)) = 0$.  Then by Corollary 1.204 in \cite{fonseca2007modern},
\beq
\lim_{n \rightarrow \infty} \frac{\cI_{\eps_n}[v_n; Q_\nu(x_0, r_0)]}{r_0^{d-1}} = \frac{ \zeta(Q_\nu(x_0, r_0))}{r_0^{d-1}} \le M \nonumber
\eeq
and so
\beq
\cI_{\eps_n}[v_n; Q_\nu(x_0, r_0)] \le (M+1) \, r_0^{d-1} \nonumber
\eeq
for all $n \ge n_0 = n_0(r_0)$.  Let $\m \in \mathbb{N}$.  By Proposition \ref{LayerProp} with $K := (M+1) r_0^{d+1}$, for each $n \ge n_0$ there exists $i_n \in \{1, \dots, \m\}$ such that
\beq
\cI_{\eps_n} [v_n; L_{i_n}] \le \frac{(M+1) \, r_0^{d-1}}{\m}, \nonumber
\eeq
where $L_{i_n} := Q_\nu\left(x_0,\frac{r_0}{2} \left( 1 + \frac{i_n}{\m} \right)\right) \backslash Q_\nu \left(x_0,\frac{r_0}{2} \left( 1 + \frac{i_n-1}{\m} \right)\right)$.

Since $i_n \in \{1, \dots, \m\}$ for all $n \ge n_0$, there exists $i^{(1)} \in \{1, \dots, \m\}$ such that $i^{(1)} = i_n$ for infinitely many $n$, say $n_l^{(1)}, l \in \mathbb{N}$.
Let $L^{(1)} := L_{i^{(1)}}$.  Then
\beq
\label{D}
\cI_{\eps_{n_l^{(1)}}} [v_{n_l^{(1)}}; L^{(1)}] \le \frac{ (M+1) \, r_0^{d-1}}{\m} \nonumber
\eeq
for all $n_l^{(1)}$, $l \in \mathbb{N}$.  
We proceed by induction. Let 
$$ \alpha_k:=\max\left\{\frac{4\m(\m+i-1)}{(4\m-1)(\m+i)}:\,i=1,\ldots k\right\}=1-\frac{1}{4k-1}.$$ 
Fix $\beta_k\in (\alpha_k,1)$  and for $j\in\mathbb{N}$ choose 
\beq
\label{F} 
r_j \in \left( \frac{r_{j-1}}{2} \left(1+ \frac{4i^{(j)} -3}{4\m-1} \right), \beta_k\frac{r_{j-1}}{2} \left(1 + \frac{i^{(j)}}{\m} \right) \right)
\eeq
such that $\zeta(\partial Q_\nu(x_0, r_{j})) = \mu(\partial Q_\nu(x_0, r_{j})) = 0$.  Note that $r_j \rightarrow 0^+$ since $r_j<\beta_k r_{j-1}$ for all $j$.
Thus, for every $j$ we find a subsequence $\{v_{n_l^{(j)}}\}_{l \in \mathbb{N}} \subset \{v_{n_l^{(j-1)}} \}_{l \in \mathbb{N}}$ and a layer 
\beq
\label{LjDef}
L^{(j+1)} := Q_\nu\left(x_0,\frac{r_{j}}{2} \left( 1 + \frac{i^{(j+1)}}{\m} \right)\right) \backslash Q_\nu\left(x_0,\frac{r_{j}}{2} \left( 1 + \frac{i^{(j+1)}-1}{\m} \right)\right)
\eeq
such that
\beq
\label{E}
\cI_{\eps_{n_l^{(j+1)}}} [v_{n_l^{(j+1)}}; L^{(j+1)}] \le \frac{ (M+1) \, r_{j}^{d-1}}{\m}
\eeq
for all $l \in \mathbb{N}$.

By \eqref{dLambdadH1} and Corollary 1.204 in \cite{fonseca2007modern},
\beq
\frac{d \lambda_{ac}}{d \cH^{d-1}} (x_0) = \lim_{j \rightarrow \infty} \frac{\lambda(Q_\nu(x_0, r_j))}{r_j^{d-1}} = \lim_{j \rightarrow \infty} \lim_{n \rightarrow \infty} \frac{\cF_{\eps_n}[v_n; Q_\nu(x_0, r_j)]}{r_j^{d-1}}, \nonumber
\eeq
and by Theorem 3.59 from {\cite{ambrosio2000functions} (see also \eqref{MT2})
\beq
 \lim_{j \rightarrow \infty} \lim_{n \rightarrow \infty} \frac{1}{r_j^{d}} \int_{Q_\nu(x_0, r_j)}  |v_{n} - \tilde{v}_0|^2 dx   =  \lim_{j \rightarrow \infty} \frac{1}{r_j^{d}} \int_{Q_\nu(x_0, r_j)}  |v - \tilde{v}_0|^2 dx = 0, \nonumber
 \label{vDiffu0}
\eeq
where $\tilde{v}_0(x):=v_0(x-x_0)$ with $v_0$ introduced in Proposition \ref{glProp}.
Further, by \eqref{E} and using the fact that for $j\in\mathbb{N}$, $\eps_{n_l^{(j)}} \to 0$ as $l\to\infty$, we can use a diagonal argument to find $\eps^{(j)} \in \{ \eps_{n_l^{(j)}} \}_{l \in \mathbb{N}}$ and $\tilde{v}_j \in \{v_{n_l^{(j)}} \}_{l \in \mathbb{N}}$ such that $\eps^{(j)}/r_{j} \rightarrow 0$,
\beq
\label{DlambdaDiag}
\frac{d \lambda_{ac}}{d \cH^{d-1}} (x_0) =  \lim_{j \rightarrow \infty}  \frac{\cF_{\eps^{(j)}} [\tilde{v}_j; Q_\nu(x_0, r_j)]}{r_j^{d-1}},
\eeq
\beq
\label{VjConv}
 \lim_{j \rightarrow \infty}  \frac{1}{r_j^{d}} \int_{Q_\nu(x_0, r_j)}  |\tilde{v}_{j} - \tilde{v}_0|^2 dx = 0,\text{\qquad and}
\eeq
\beq
\label{ILayerBound}
\cI_{\eps^{(j)}} [\tilde{v}_j; L^{(j)}] \le \frac{ (M+1) \, r_{j}^{d-1}}{\m}.
\eeq
Define
\beq
w_j(y) := \tilde{v}_j(x_0 + r_j y), \quad y \in Q_\nu(0,1), \nonumber
\eeq
and (see Proposition \ref{glProp})
\beq
\tilde{L}_k := Q_\nu(0,1) \backslash Q_\nu(0, 1- 1/(4\m)). \nonumber
\eeq
Since $L^{(j)} \supseteq Q_\nu(x_0, r_j) \backslash Q_\nu(x_0, r_j(1 - 1/(4\m)))=x_0+r_j\tilde{L}_k$ by \eqref{LjDef} and \eqref{F}, by \eqref{ILayerBound} we have
\beq
\cI_{\eps^{(j)}/r_j}[w_j; \tilde{L}_k] =  \frac{1}{r_j^{d-1}} \cI_{\eps^{(j)}}[\tilde{v}_j; x_0 + r_j \tilde{L}_k]  \le \frac{1}{r_j^{d-1}} \cI_{\eps^{(j)}}[\tilde{v}_j; L^{(j)}] \le  \frac{r_{j-1}^{d-1}}{r_j^{d-1}} \frac{(M+1) }{\m} \le \frac{(M+1) \, 2^{d-1}}{\m}, \nonumber
\eeq
where we also used $r_j > \frac{r_{j-1}}{2}$.
Moreover \eqref{DlambdaDiag} and \eqref{VjConv} become
\beq
\frac{d \lambda_{ac}}{d \cH^{d-1}} (x_0) =  \lim_{j \rightarrow \infty}  \cF_{\eps^{(j)}/r_j} [w_j; Q_\nu(0, 1)], \nonumber
\eeq
and
\beq
 \lim_{j \rightarrow \infty}  \int_{Q_\nu(0, 1)}  |w_j - v_0|^2 dy = 0. \nonumber
\eeq
We can apply Proposition \ref{glProp} to obtain
\beq
\frac{d \lambda_{ac}}{d \cH^{d-1}} (x_0) \ge m_d - \frac{C}{\m}. \nonumber
\eeq
Letting $\m \rightarrow \infty$ completes the proof.
\section{Proof of the Limsup Inequality}\label{sec:limsup}
We now turn to the proof of \eqref{Limsup}, where we follow closely the argument in \cite{chermisi2010singular}.
\vspace{0.2cm}

\noindent \emph{Step 1.} Assume first that the target function $v$ has a flat interface orthogonal to a given direction $\nu \in \mathbb{S}^{d-1}$, and that $\Omega$ has a Lipschitz boundary that meets this interface orthogonally.  More precisely, without loss of generality (under suitable rigid transformations of the coordinate system), we assume that $v\in BV(\Omega;\{\pm 1\})$ is of the simple form
\begin{align}
v(x) := \left\{\begin{array}{ll}
-1  &\mbox{ if  }  x_{\N} < 0, \\
1 & \mbox{ if } x_{\N} > 0,
\end{array}\right. \nonumber
\end{align}
where we use the notation $x_d:=x\cdot e_{\d}=x\cdot\nu$,
and  that the normal to $\partial \Omega$ is orthogonal to $e_{\N}$ for all $x \in \partial \Omega$ with $|x_{\N}|$ small enough.
Let $\rho > 0$.  By definition of $m_{\d}$ (see \eqref{eq:md} and the remark after), there exist $\eps_0 > 0$ and $w \in \cA_\nu$ such that
\beq
\int_{Q} \left( \frac{1}{\eps_0} \W(-\eps_0^2 \Delta w + w) - \eps_0 q \left| \nabla w\right|^2 + (1-2q) \eps_0^3 |\Delta w|^2 + (1-q) \eps_0^5|\nabla \Delta w|^2 \right) \,dx < m_{\d} + \rho.
\label{eq:Defw}
\eeq
Define
\begin{align}
w_n(x) := \left\{\begin{array}{ll}
-1  &\mbox{ if  }  x_{\N} < -\frac{\eps_n}{2 \eps_0}, \\
w \left(\frac{\eps_0 x}{\eps_n}\right) & \mbox{ if } |x_{\N}| \le \frac{\eps_n}{2\eps_0}, \\
1 & \mbox{ if } x_{\N} > \frac{\eps_n}{2 \eps_0}.
\end{array}\right. \nonumber
\label{eq:wn}
\end{align}
Note that, for $n$ large enough, $w_n\in W^{3,2}(\Omega)$.
Moreover, we claim that $w_n\rightarrow v$ in $L^2(\Omega)$.  Indeed,
\beq
\|w_n-v\|_{L^2(\Omega)}=\|w_n-v\|_{L^2(\{x\in\Omega:\,|x_{\N}|<\frac{\eps_n}{2\eps_0}\})}\leq \|w_n\|_{L^2(\{x\in\Omega:\,|x_{\N}|<\frac{\eps_n}{2\eps_0}\})}+\|v\|_{L^2(\{x\in\Omega:\,|x_{\N}|<\frac{\eps_n}{2\eps_0}\})},  \nonumber
\eeq
where for $n$ sufficiently large
\beq
\|v\|_{L^2(\{x\in\Omega:\,|x_{\N}|<\frac{\eps_n}{2\eps_0}\})}=\left|\left\{x\in\Omega:\,|x_{\N}|<\frac{\eps_n}{2\eps_0}\right\}\right|\rightarrow 0  \mbox{ as } n \rightarrow \infty. \nonumber
\eeq
Further, setting $\Omega':=\{x'\in\R^{\N-1}:\,(x',0)\in\Omega\}$, we have for sufficiently large $n$, that $\{x \in \Omega: |x_d| \le \eps_n/(2\eps_0)\} = \Omega' \times [-\eps_n/(2\eps_0), \eps_n/(2\eps_0)]$. Hence, applying the change of variables $t:=\frac{\eps_0 x_{\N}}{\eps_n}$ yields
\beq
\label{wnNorm}
 \|w_n\|^2_{L^2(\{x\in\Omega:\,|x_{\N}|<\frac{\eps_n}{2\eps_0}\})}  = \int_{\left\{x \in \Omega: |x_d| < \frac{\eps_n}{2\eps_0} \right\}} \left| w \left(\frac{\eps_0 x}{\eps_n} \right) \right|^2 dx = \frac{\eps_n}{\eps_0} \int_{-1/2}^{1/2} \int_{\Omega'} \left| w \left(\frac{\eps_0 x'}{\eps_n}, t \right) \right|^2 dx'  d t.
\eeq
Since $w$ is periodic in the first $\d-1$ arguments, applying   Fubini's Theorem  and the Riemann-Lebesgue Lemma (see for example Lemma 2.85 in \cite{fonseca2007modern}) to  $\int_{-1/2}^{1/2} \left| w\left(\frac{\eps_0 x'}{\eps_n}, t\right) \right|^2 dt \in L^1_{\text{loc}}(\R^{d-1})$ gives
\beq
\lim_{n \rightarrow \infty}  \int_{\Omega'} \int_{-1/2}^{1/2} \left| w \left(\frac{\eps_0 x'}{\eps_n}, t \right) \right|^2   d t \ dx' = \int_{\Omega'}  \int_{Q'}  \int_{-1/2}^{1/2}  |w(y, t)|^2  \ dt \ dy  \ dx' = \cL^{d-1}(\Omega') ||w||^2_{L^2(Q)}. \nonumber
\eeq
It then follows from \eqref{wnNorm} that 
\beq
 \|w_n\|^2_{L^2(\{x\in\Omega:\,|x_{\N}|<\frac{\eps_n}{2\eps_0}\})}   \le \frac{C \eps_n}{\eps_0} ||w||_{L^2(Q)}^2 \rightarrow 0, \mbox{ as } n \rightarrow \infty. \nonumber
\eeq
This concludes the proof that $w_n \rightarrow v$ in $L^2(\Omega)$. 

Since $w_n = \pm 1$ on $\{x \in \Omega: |x_d| \ge \frac{\eps_n}{2\eps_0}\}$, the contribution to the energy only comes from the interfacial region $\{x \in \Omega: |x_d| \le \frac{\eps_n}{2\eps_0}\}$, where we have
\beq
-\eps_n^2 \Delta w_n(x) + w_n(x) = -\eps_0^2 \Delta w\left(\frac{\eps_0 x}{\eps_n}\right) + w\left(\frac{\eps_0 x}{\eps_n}\right).  \nonumber
\eeq
Setting, as before, $t := \frac{\eps_0 x_d}{\eps_n}$ we have for $n$ sufficiently large
\begin{align}
 \mathcal{F}_{\eps_n}[w_n; \Omega]&= \int_{\{x\in\Omega:\,|x_{\N}|<\frac{\eps_n}{2\eps_0}\}}\left\{\frac{1}{\eps_n}W\left(-\eps_0^2\Delta w+w\right)-\frac{\eps_0^2}{\eps_n}q\left|\nabla w\right|^2+  (1-2q) \frac{\eps_0^4}{\eps_n}|\Delta w|^2 + \right. \nonumber \\
&+ \left. (1-q) \frac{\eps_0^6}{\eps_n}|\nabla\Delta w|^2
\right\}\left(\frac{\eps_0 x}{\eps_n}\right)\,dx \nonumber \\
&= \int_{\Omega'}\int_{-\frac{1}{2}}^{\frac{1}{2}} \Bigg\{\frac{1}{\eps_0}W\left(-\eps_0^2\Delta w+w\right) -q\eps_0\left|\nabla w\right|^2 + \nonumber \\
&+ (1-2q) \eps_0^3|\Delta w|^2 + (1-q) \eps_0^5|\nabla\Delta w|^2 \Bigg\} \left(\frac{\eps_0x'}{\eps_n},t\right)\,dt  \ dx'. \nonumber
\end{align}
Since $w$ is periodic in the first $\d-1$ arguments, also the functions
\begin{eqnarray*}
&& x'\mapsto\int_{-\frac{1}{2}}^{\frac{1}{2}}W(-\eps_0^2\Delta w+w)(x',t)\,dt,\quad x'\mapsto\int_{-\frac{1}{2}}^{\frac{1}{2}}\left|\nabla w\right|^2(x',t)\,dt,\\
&&x'\mapsto\int_{\frac{1}{2}}^{\frac{1}{2}}|\Delta w|^2(x',t)\,dt,\quad\mbox{and}\quad x'\mapsto\int_{-\frac{1}{2}}^{\frac{1}{2}}|\nabla\Delta w|^2(x',t)\,dt \nonumber
\end{eqnarray*}
are periodic and locally in $L^1$, where for the integral involving $W$ we used the quadratic growth assumption from Hypotheses \ref{assume1}. Thus, by the Riemann-Lebesgue Lemma and the choice of $w$ (see \eqref{eq:Defw}),
\begin{align}
 \lim_{n\rightarrow\infty} \mathcal{F}_{\eps_n}[w_n; \Omega]&=\mathcal{L}^{\N-1}(\Omega')\int_Q\Big\{\frac{1}{\eps_0}W(-\eps_0^2\Delta w+w)-q\eps_0\left|\nabla w\right|^2+ (1- 2q) \eps_0^3|\Delta w|^2 \nonumber \\
& + (1-q) \eps_0^5|\nabla\Delta w|^2\Big\}\,dx  \leq (m_{\d}+\rho)\mbox{Per}_{\Omega}(\{v=1\}),
\label{eq:imppart}
\end{align}
and the limsup inequality follows since $\rho>0$ is arbitrarily small.
\vspace{0.2cm}

\begin{figure}
\begin{center}
\includegraphics[height=2in]{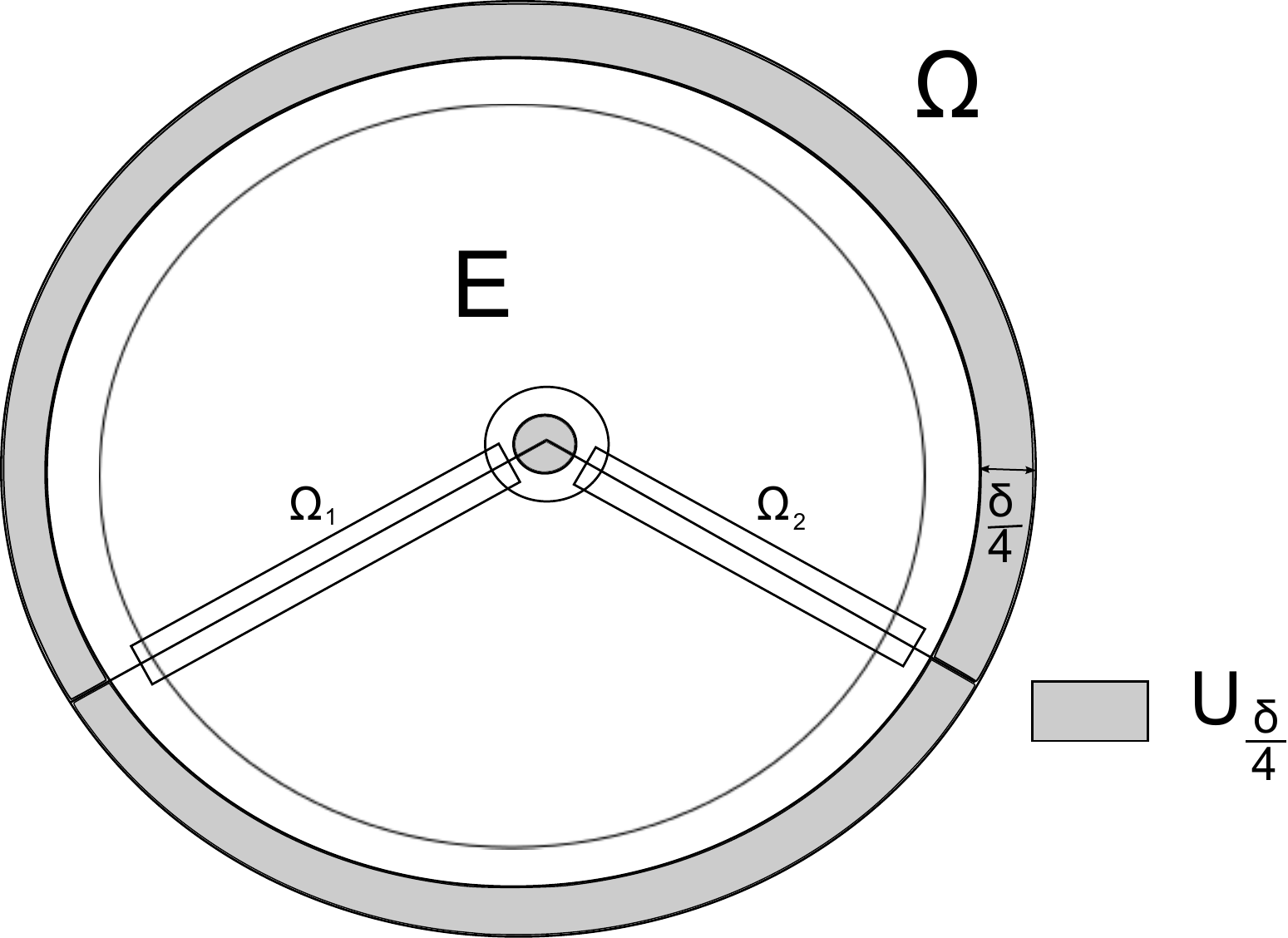}
\vskip 0.1in
\caption{Construction in Step 2.}
\label{experiment2}
\end{center}
\end{figure}

\noindent \emph{Step 2.} Consider now the case in which
\beq
v = \chi_{E} - \chi_{\Omega \backslash E}, \nonumber
\eeq
where $\mbox{Per}_\Omega(E) < \infty$ and $E$ has the form $E=P\cap\Omega$ with $P$ a polyhedron, i.e., there is $L \in \mathbb{N}$ such that $\partial P=H_1\cup H_2\cup\dots\cup H_L\cup F$ with pairwise disjoint relatively open convex polyhedra $H_i$ of dimension $\N-1$, $H_i\subset\{x\in\R^{\N}:\,(x-x_i)\cdot\nu_i=0\}$ for some $x_i\in\R^{\N}$ and $\nu_i\in S^{\N-1}$, $i = 1, \dots, L$, and $F$ is the union of a finite number of convex polyhedra of dimension $\N-2$.  Finally, we assume that $E$ meets the boundary of $\Omega$ transversally, more precisely
\beq
\label{trans}
\partial \Omega \cap \partial P \mbox{ is the union of a finite number of $C^1$ manifolds of dimension $d-2$}.
\eeq
 We extend $v$ to $\R^{\N}$ by setting
\[v(x):=\chi_P(x)-\chi_{\R^{\N}\setminus P}(x), \]
and define 
\beq
\label{recovMoll}
\varphi_n:=v\ast\Psi_{\eps_n} 
\eeq
 with mollifiers $\Psi_{\eps_n}$ (see \eqref{mol}).
For fixed (small) $0<\delta<1$ set
\beq
U_\delta:=\left\{x\in\Omega:\,\mbox{dist}(x, \partial\Omega \cup F)\leq\delta\right\} \nonumber
\eeq
and let $H_i'$ be relatively open subsets of $H_i$ with a $d-2$ dimensional $C^\infty$ boundary such that
\beq
\left\{x\in H_i\cap\Omega:\,\mbox{dist}(x,\partial\Omega\cup F) \ge \frac{\delta}{2}\right\}\subset H_i'\subset\overline{H_i'}\subset H_i\cap\Omega \nonumber
\eeq
and $\overline{H_i'} \cap U_{\frac{\delta}{4}} = \emptyset$.
Fix $0<\eta<\delta/2$, and set for every $i=1,2, \dots, L$,
\beq
\Omega_i:=\left\{x+t\nu_i:\,x\in H_i',\,|t|<\eta\right\}.  \nonumber
\eeq
Taking $\eta$ sufficiently small we may assume, without loss of generality, that $\Omega_1,\dots,\Omega_L$ are pairwise disjoint and
\beq
\label{Omi}
\overline{\Omega_i} \cap U_{\frac{\delta}{4}} = \emptyset.
\eeq
We apply {\em Step 1} to every $\Omega_i$ to obtain a sequence $\{w_n^i\} \subset W^{3,2}(\Omega_i)$ such that $w_n^i\rightarrow v$ in $L^2(\Omega_i)$, and $\lim_{n\rightarrow\infty}\mathcal{F}_{\eps_n}[w_n^i;\Omega_i]\leq(m_{\N}+\rho)\mathcal{H}^{\N-1}(H_i\cap\Omega_i)$.  For every $\delta > 0$ choose cut-off functions $\eta_\delta\in C_c^\infty(\R^{\N};[0,1])$ such that
\begin{align}
\label{etaDerivBounds}
\eta_\delta=0\mbox{\ in\ }U_\delta,\quad \eta_\delta=1\mbox{\ in\ }\R^{\N}\setminus U_{2\delta}, \quad\|\nabla^k\eta_\delta\|_{L_\infty(\R^{\N})}\leq C/\delta^k\mbox{\ for\ }k=1,2,3.
\end{align}
Define $V_n$ by
\begin{align}
 \label{VNDef}
V_n:=\begin{cases}
      \eta_\delta w_n^i+(1-\eta_\delta)\varphi_n&\mbox{in\ }\overline{\Omega_i}, \quad  i = 1, \dots, L,\\
\eta_{\frac{\delta}{8}} \varphi_n&\mbox{in\ }A:=\Omega\setminus(\overline{\Omega_1}\cup\dots \cup \overline{\Omega_L}).
     \end{cases}
\end{align}
We claim that $V_n \in W^{3,2}(\Omega)$ and satisfies Neumann boundary conditions on $\partial \Omega$. Indeed, considering $V_n$ in the neighborhood of $\partial A$, we observe that
by construction of $w_n^i$ in \emph{Step 1}
\beq
w_n^i(x) = v(x) \quad \mbox{ for } x \in \overline{\Omega_i} \mbox{ and } \mbox{dist}(x, H_i) \ge \frac{\eps_n}{2\eps_0}. \nonumber
\eeq
Hence, from \eqref{recovMoll}, for sufficiently large $n$ we have $w_n^i = \varphi_n$  in a neighborhood of $\{x \in \partial \Omega_i: \mbox{dist}(x,H_i) = \eta \}$ (the part of $\partial \Omega_i$ parallel to $H_i$),
and by \eqref{Omi} in that region both  $\eta_\delta w_n^i+(1-\eta_\delta)\varphi_n$ and $\eta_{\frac{\delta}{8}} \varphi_n$ are equal to $\varphi_n$.
In addition, $\{x \in \partial \Omega_i: \mbox{dist}(x,H_i) < \eta \}$ (the part of $\partial \Omega_i$ orthogonal to $H_i$) is contained in $U_{\delta} \backslash U_{\delta/4}$ and   both  $\eta_\delta w_n^i+(1-\eta_\delta)\varphi_n$ and $\eta_{\frac{\delta}{8}} \varphi_n$ are equal to $\varphi_n$ also in that region.  Finally, $V_n$ is identically zero in a neighborhood of $U_{\frac{\delta}{8}}$ so the Neumann boundary conditions are satisfied.

Furthermore, $\lim_{n \rightarrow \infty} ||V_n - v||_{L^2(\Omega)} \le C\delta$, since $w_n^i\rightarrow v$ in $L^2(\Omega_i)$ and $\varphi_n \rightarrow v$ in $L^2(\Omega \backslash U_{\frac{\delta}{8}})$. 
It remains to estimate the energies.  By \eqref{recovMoll}, $V_n$ is possibly different from $\pm 1$ only on $U_{\frac{\delta}{4}}$ and on
\beq
R_n:=\left\{x\in\Omega:\,\mbox{dist}(x,\partial P)\leq\max\{\eps_n/(2\eps_0),\,\eps_n\}\right\}. \nonumber
\eeq
Using the notation from \eqref{VNDef}, $V_n = \eta_{\frac{\delta}{8}} \varphi_n$ on $U_{\frac{\delta}{4}}$, $V_n = \varphi_n$ on $A \backslash U_{\frac{\delta}{4}}$, $A\cap R_n\subset U_\delta$ and $\mathcal{H}^{\N-1}(\partial P\cap U_\delta)\leq C\delta$.  Thus, for $n$ sufficiently large,
\begin{align}
\mathcal{F}_{\eps_n}[V_n;A] & \leq \left| \cF_{\eps_n}[\eta_{\frac{\delta}{8}} \varphi_n; U_{\frac{\delta}{4}}] \right|+ \int_{A\cap R_n}\Big(\frac{1}{\eps_n}W(-\eps_n^2 \Delta \varphi_n + \varphi_n) + \eps_n |q| |\nabla \varphi_n|^2+ (1-2q) \eps_n^3|\Delta \varphi_n|^2 \nonumber \\
& + (1-q) \eps_n^5|\nabla\Delta \varphi_n|^2\Big) dx \leq C\delta,  \nonumber
\end{align}
where we also used \eqref{phiBounds} and  \eqref{etaDerivBounds} to bound the derivatives of $\varphi_n$ and $\eta_{\frac{\delta}{8}}$, respectively.  Next we estimate the energy in $\Omega_i$.
In $\Omega_i \cap U_\delta$, $V_n = \varphi_n$ and using  \eqref{phiBounds} yields
\beq
\cF_{\eps_n}[V_n; \Omega_i\cap U_\delta] = \cF_{\eps_n}[V_n; \Omega_i\cap U_\delta \cap R_n] \leq C\delta.
 \label{eq:K1}
\eeq
To obtain estimates inside $T := \Omega_i \cap (U_{2\delta} \backslash U_\delta)$ we first observe that
\begin{align}
\partial_{x_i} V_n = w_n^i \partial_{x_i} \eta_{\delta}  + \eta_{\delta} \partial_{x_i} w_n^i -  \varphi_n \partial_{x_i} \eta_{\delta}  + (1-\eta_{\delta}) \partial_{x_i} \varphi_n, \nonumber
\end{align}
and arguing as in \eqref{eq:imppart},
\beq
\label{wnDerivBounds}
\lim_{n \rightarrow \infty} \eps_n^{2k-1} || \nabla^k w_n^i ||_{L^2(\Omega_i \cap U_{2\delta})}^2 \le C(\rho) \, \cH^{d-1}(H_i \cap U_{2\delta})  \le C(\rho) \, \delta \mbox{ for } k =0, \dots 3, 
\eeq
where we also used the fact that $w \in W_{loc}^{3,\infty}(\R^d)$.
Combined with the bounds on $\varphi_n$ from \eqref{phiBounds}, it follows that,
\begin{align}
\int_T \eps_n |\nabla V_n|^2 dx & = \int_{T \cap R_n} \eps_n |\nabla V_n|^2 dx \le C(\rho) \Big( \frac{\eps_n}{\delta^2} ||w_n^i||_{L^2(T)}^2 + \eps_n ||\nabla w_n^i||_{L^2(T)}^2 + \frac{\eps_n}{\delta^2} ||\varphi_n||_{L^2(T)}^2 + \nonumber \\
& +   \eps_n ||\nabla \varphi_n||_{L^2(T)}^2 \Big) \le C(\rho) \left( \delta + \frac{\eps_n}{\delta^2} \right). \nonumber
\end{align}
Analogous calculations for the higher derivatives of $V_n$, yield the bound
\beq
 \cF_{\eps_n}[V_n;\Omega_i\cap(U_{2\delta}\setminus U_\delta)]\leq C(\rho) \, \delta
 \label{eq:K2} 
\eeq
for $n$ sufficiently large.
Next, by \eqref{VNDef}, \eqref{wnDerivBounds} and \eqref{phiBounds}, we have
\beq
\lim_{n\rightarrow\infty} \int_{\Omega_i\cap U_{2\delta}}\eps_n|\nabla V_n|^2\,dx\leq C(\rho) \, \delta, \nonumber
\eeq
and hence
\begin{align}
&\int_{\Omega_i\backslash U_{2\delta}}\left\{\frac{1}{\eps_n}W(V_n)-\eps_nq|\nabla V_n|^2+ (1-2q) \eps_n^3|\Delta V_n|^2+ (1 - q) \eps_n^5|\nabla\Delta V_n|^2\right\}\,dx\nonumber\\
&\leq\int_{\Omega_i}\left\{\frac{1}{\eps_n}W(V_n)-\eps_nq|\nabla V_n|^2+ (1-2q) \eps_n^3|\Delta V_n|^2+ (1-q) \eps_n^5|\nabla\Delta V_n|^2\right\}\,dx+C(\rho)\, \delta. \nonumber \\
 \label{eq:K3}
\end{align}
Combining \eqref{eq:imppart}, \eqref{eq:K1}, \eqref{eq:K2}, and \eqref{eq:K3}, we obtain for $\delta$ sufficiently small a sequence $V_n \in W^{3,2}(\Omega)$,  with Neumann boundary conditions on $\partial \Omega$, satisfying
\beq
\lim_{n \rightarrow \infty} ||V_n - v||_{L^2(\Omega)} \le \rho \nonumber
\eeq
and
\begin{align}
 \limsup_{n\rightarrow\infty} \mathcal{F}_{\eps_n}[V_n]& = \limsup_{n\rightarrow\infty} \mathcal{F}_{\eps_n}[V_n;\Omega] \leq \limsup_{n\rightarrow\infty}\sum_{i=1}^L \mathcal{F}_{\eps_n}[V_n;\Omega_i]+\limsup_{n\rightarrow\infty}\mathcal{F}_{\eps_n}[V_n;A] \nonumber\\
&\leq (m_{\d}+\rho)\sum_{i=1}^L\mathcal{H}^{\N-1}(\Omega_i\cap H_i)+C(\rho) \, \delta \nonumber \\
&\leq (m_{\d}+\rho)\mathcal{H}^{\N-1}(\Omega\cap\partial P)+ \rho, \nonumber
\end{align}
and the Limsup Inequality \eqref{LimInf} follows by a standard diagonalizing argument.

\vspace{0.2cm}

\noindent \emph{Step 3.} Lastly we consider the case in which the target function is
\beq
v = \chi_{E} - \chi_{\Omega \backslash E}, \nonumber
\eeq
where $E$ is an arbitrary set of finite perimeter in $\Omega$.  Since $\Omega$ is bounded and has $C^2$ boundary, we can approximate $E$ with smooth sets (see Remark 3.43 in \cite{ambrosio2000functions}) and then with polyhedral sets.  In particular, we may find sets $E_k \subset \Omega$ of the form $E_k = P_k \cap \Omega$, where $P_k$ are polyhedral sets satisfying \eqref{trans} such that $\cH^{d-1}(\partial E_k \cap \partial \Omega) = 0$, $\chi_{E_k} \rightarrow \chi_{E}$ in $L^2(\Omega)$, and $\mbox{Per}_\Omega(E_k) \rightarrow \mbox{Per}_{\Omega}(E)$ as $k \rightarrow +\infty$.  We apply \emph{Step 2} to each function $v_k := \chi_{E_k} - \chi_{\Omega \backslash E_k}$ to find a sequence
\beq
V_n^k \rightarrow v_k \nonumber
\eeq
satisfying
\beq
 \limsup_{n\rightarrow\infty} \cF_{\eps_n}[V_n^k;\Omega] \le m_{\d}\mathcal{H}^{\N-1}(E_k\cap\partial P_k) \nonumber
\eeq
and
\beq
\limsup_{k \rightarrow \infty} \limsup_{n\rightarrow\infty} \cF_{\eps_n}[V_n^k] \le \limsup_{k \rightarrow \infty} \left( m_{\d} \mathcal{H}^{\N-1}(E_k\cap\partial P_k) \right) = m_{\d} \mbox{Per}_{\Omega}(E). \nonumber
\eeq
The general  result now follows by a diagonalizing argument.

\section*{Appendix}

We derive the energy functional \eqref{mainFunct} from  \eqref{mainEnergy}. To eliminate the dependence on $h$ we assume that $\phi$ and $h$ satisfy the Euler-Lagrange equation
\beq
\label{ELh}
\frac{\delta \mathcal{E}}{\delta h}(\phi,h) = 0.
\eeq
After changing variables, $x := \bar{x}/L$, $u(x) := \phi(\bar{x})$ in \eqref{mainEnergy} we have
\beq
\label{mainEnergy2}
\frac{1}{L^d}\mathcal{E}[u, h] = \int_\Omega \left(f(u) + \frac{b}{2L^2}  |\nabla u|^2 + \frac{\sigma}{2 L^2} |\nabla h|^2 + \frac{\kappa}{2 L^4} [\Delta h]^2 +\frac{ \Lambda}{L^2} u \Delta h \right) \,dx,
\eeq
where $\Omega := \{ x/L: x \in D\}$.
Assuming natural boundary conditions, the Euler-Lagrange equation \eqref{ELh} takes the form
\begin{align}
\label{ELEqn}
\begin{cases}
\Delta \left(\frac{\kappa}{L^4}  \Delta h - \frac{\sigma}{L^2} h +\frac{\Lambda}{L^2} u \right) = 0 & \text{ in } \Omega, \\
\frac{\partial h}{\partial \n} = 0, \frac{\partial \Delta h}{\partial \n} = 0, \frac{\partial u}{\partial \n} = 0, \frac{\partial \Delta u}{\partial \n} = 0 & \text{ on } \partial \Omega.
     \end{cases}
\end{align}
Consider the Fourier Series expansions of $h$ and $u$,
\begin{align}
h = \sum_{i=0}^{\infty} h_i \psi_i, \quad
u = \sum_{i=0}^{\infty} u_i \psi_i,  \nonumber
\end{align}
where $\psi_i$ are the eigenfunctions of $-\Delta$ on $H^1(\Omega)$ with Neumann boundary conditions.  Denote the corresponding nonnegative eigenvalues by $\lambda_i^2$.
Then, since $\psi_0 = const$ (due to Neumann boundary conditions), we have
\beq
\Delta h = -\sum_{i=1}^\infty  \lambda_i^2 h_i \psi_i,
\mbox{\quad and\quad }
\Delta^2 h = \sum_{i=1}^\infty \lambda_i^4 h_i \psi_i, \nonumber
\eeq
and thus by \eqref{ELEqn} 
\beq
\sum_{i=1}^\infty \lambda_i^2 \left(\frac{\kappa}{L^2} \lambda_i^2 h_i + \sigma h_i - \Lambda u_i \right) \psi_i = 0. \nonumber
\eeq
Taking the $L^2$ inner product with $\psi_j$, and noting that $\left< \psi_i, \psi_j \right>_{L^2(\Omega)} = \delta_{ij}$, we obtain
\beq
\lambda_j^2 \left(\frac{\kappa}{L^2} \lambda_j^2 h_j  + \sigma h_j  - \Lambda u_j \right) = 0 \mbox{ for } j=1, \dots, \infty. \nonumber
\eeq
Solving for $h_j$ yields
\beq
h_j = \frac{\Lambda u_j}{\sigma + (\kappa/L^2) \lambda_j^2} \mbox{ for } j =1, \dots, \infty, \nonumber
\eeq
and
\beq
h(x) = \sum_{i=0}^\infty h_i \psi_i(x) = const + \sum_{i=1}^\infty h_i \psi_i(x) = const + \sum_{i=1}^\infty \frac{\Lambda u_i \psi_i(x)}{\sigma + (\kappa/L^2) \lambda_i^2}. \nonumber 
\eeq
Using this expansion and $\Delta \psi_i = -\lambda_i^2 \psi_i$ gives
\beq
\label{DHExp}
-\Delta h(x) =  \sum_{i=1}^\infty \frac{\Lambda \lambda_i^2 u_i \psi_i(x)}{\sigma + (\kappa/L^2) \lambda_i^2}.
\eeq
In addition, multiplying \eqref{ELEqn} by $h$ and integrating by parts, we obtain
\beq
\int_\Omega \left( (\kappa/L^2) (\Delta h)^2 + \sigma |\nabla h|^2 + \Lambda u \Delta h \right) \,dx = 0, \nonumber
\eeq
and consequently
\beq
\label{hRed}
\frac{1}{2} \int_\Omega \left( (\kappa/L^2) (\Delta h)^2 + \sigma |\nabla h|^2 \right)\,dx  = - \frac{1}{2} \int_\Omega \Lambda u \Delta h  \,dx .
\eeq
Substituting \eqref{hRed} into \eqref{mainEnergy2} yields
\beq
\label{redF}
\frac{1}{L^d} \mathcal{E}[u, h] = \int_\Omega \left( f(u) + \frac{b}{2L^2}  |\nabla u|^2 + \frac{\Lambda}{2 L^2}  u \Delta h \right)\,dx.
\eeq
To eliminate the dependence on $h$ observe that since $\langle \psi_i, \psi_j \rangle_{L^2(\Omega)} = \delta_{ij}$, \eqref{DHExp} implies that
\beq
\int_\Omega u \Delta h \,dx = -\int_\Omega  \left(\sum_{i=0}^{\infty} u_i \psi_i(x)\right) \left(  \sum_{j=1}^\infty \frac{\Lambda \lambda_j^2 u_j \psi_j(x)}{\sigma + (\kappa/L^2) \lambda_j^2} \right) dx =  -\sum_{i=1}^\infty \frac{\Lambda \lambda_i^2 u_i^2}{\sigma + (\kappa/L^2) \lambda_i^2}. \nonumber
\eeq
Substituting this expression into the energy functional \eqref{redF} yields
\begin{align}
\frac{1}{L^d} \mathcal{E}[u] &= \int_\Omega \left(f(u) + \frac{b}{2 L^2} |\nabla u|^2 \right) \,dx - \frac{1}{2 L^2} \sum_{i=1}^\infty  \frac{\Lambda^2 \lambda_i^2} {\sigma + (\kappa/L^2) \lambda_i^2}   u_i^2\nonumber\\
& = \int_\Omega \left(f(u) + \frac{b}{2 L^2} |\nabla u|^2 \right) \,dx -  \frac{\Lambda^2}{2\kappa} \sum_{i=1}^\infty \left( \frac{(\kappa/L^2) \lambda_i^2 + \sigma  - \sigma} {\sigma + (\kappa/L^2) \lambda_i^2} \right)  u_i^2\nonumber\\
&= \int_\Omega \left(f(u) + \frac{b}{2L^2}  |\nabla u|^2 \right) \,dx - \frac{\Lambda^2}{2\kappa}  \sum_{i=1}^\infty u_i^2 + \frac{\Lambda^2}{2\kappa}  \sum_{i=1}^\infty \left( \frac{ \sigma} {\sigma + (\kappa/L^2) \lambda_i^2} \right)  u_i^2.
\label{fullF1}
\end{align}
At this point one can use a long-wavelength approximation as suggested for example in \cite{Komura:Langmuir:2006} resulting in
an approximation energy
\begin{align}
\label{AppEn}
\frac{1}{L^d} \mathcal{E}_{ap}[u] 
& =  \int_\Omega \left(f(u) + \frac{1}{2 L^2} \left(b - \frac{\Lambda^2}{\sigma} \right) |\nabla u|^2 + \frac{\Lambda^2 \kappa}{2 L^4 \sigma^2} (\Delta u)^2 \right)\,dx,
\end{align}
which was studied in \cite{chermisi2010singular,zeppieri41asymptotic}. 
Returning to the full energy in \eqref{fullF1}, we have
\begin{align}
\frac{1}{L^d} \mathcal{E}[u] &=  \int_\Omega \left( f(u) + \frac{b}{2 L^2}  |\nabla u|^2 - \frac{\Lambda^2}{2\kappa} u^2 \right) \,dx + \frac{\Lambda^2 L^2 \sigma}{2\kappa^2} \sum_{i=1}^\infty  \frac{1} {\frac{L^2 \sigma}{\kappa} +  \lambda_i^2} u_i^2 \nonumber \\
& = \int_\Omega \left( f(u) + \frac{b}{2 L^2} |\nabla u|^2 - \frac{\Lambda^2}{2\kappa} u^2 +  \frac{\Lambda^2 L^2 \sigma}{2\kappa^2} u \left(\frac{L^2 \sigma}{\kappa} - \Delta \right)^{-1} u \right) \,dx \nonumber \\ 
& =  \int_\Omega \left( f(u) - \frac{\Lambda^2}{2\kappa }  u^2 + \frac{b}{2 L^2} |\nabla u|^2 +  \frac{\Lambda^2}{2\kappa} u \left( {\bf{1}} - \frac{\kappa}{L^2 \sigma} \Delta \right)^{-1} u \right) \,dx \nonumber \\
& = \frac{\Lambda^2}{2 \kappa} \int_\Omega \left( \frac{2\kappa}{\Lambda^2} f(u) - u^2 + \frac{ \kappa b}{L^2 \Lambda^2} |\nabla u|^2 + u \left({ \bf{1}} - \frac{\kappa}{L^2 \sigma} \Delta \right)^{-1} u \right) \,dx.  \nonumber
\end{align}
Setting
\begin{align}
\eps :=  \sqrt{\frac{\kappa}{L^2 \sigma}},
\quad q := 1 - \frac{b \sigma}{\Lambda^2},
\quad
W(u) := \frac{2\kappa}{\Lambda^2} f(u),
\mbox{\quad and\ }
\cFv_\eps := \frac{1}{\eps}\frac{2\kappa}{\Lambda^2 L^d} \mathcal{E},
\nonumber
\end{align}
yields
\beq
\label{mainFunct5}
\cFv_\eps[u]  :=
\frac{1}{\eps} \int_\Omega \left(W(u) - u^2 + (1-q) \eps^2 |\nabla u|^2 + u \left( {\bf 1} - \eps^2 \Delta \right)^{-1} u \right)\, dx. \nonumber
\eeq

\section*{Acknowledgements}
The authors warmly thank the Center for Nonlinear Analysis, where part of this research was carried out. 

\section*{Compliance with Ethical Standards}
Part of this research was carried out at the Center for Nonlinear Analysis. The center is partially supported by NSF Grant No.
DMS-0635983 and NSF PIRE Grant No. OISE-0967140.
The research of I. Fonseca
was partially funded by the National Science Foundation under Grant No.
DMS-0905778, DMS-1411646 and that of G. Leoni under Grant No. DMS-1007989, DMS-1412095. B. Zwicknagl acknowledges support by the Deutsche Forschungsgemeinschaft through the Sonderforschungsbereich 1060 {\em The mathematics of emergent effects}.

\nocite{kawakatsu1993phase}
\nocite{Seul:Science:1995}
\nocite{chermisi2010singular}
\nocite{baia2013coupled}

\bibliographystyle{plain}
\bibliography{andelman}

\end{document}